\newtheorem{theorem}{Theorem}[section]
\newtheorem{lemma}[theorem]{Lemma}
\renewcommand{\div}{\mathop{\rm div}\nolimits}
\newcommand{\norm}[1]{\left\|#1\right\|}
\title{A conservative local multiscale model reduction technique for Stokes flows in heterogeneous perforated domains}
\author{
Eric T. Chung 
\thanks{
Department of Mathematics,
The Chinese University of Hong Kong (CUHK), Hong Kong SAR.
Email: {\tt tschung@math.cuhk.edu.hk}. 
%The research is partially supported by Hong Kong RGC 
%General Research Fund.
}
%\and
%Yalchin Efendiev
%\thanks{
%Department of Mathematics \& Institute for Scientific Computation (ISC), Texas A\&M University, College Station, TX 77843-3368. 
%Email: {\tt efendiev@math.tamu.edu}.
%}
\and
Maria Vasilyeva
\thanks{
Department of Computational Technologies, Institute of Mathematics and Informatics, North-Eastern Federal University, Yakutsk, 677980, Republic of Sakha (Yakutia), Russia \& Institute for Scientific Computation, Texas A\&M University, College Station, TX 77843. 
Email: {\tt vasilyevadotmdotv@gmail.com}.
}
\and
Yating Wang
\thanks{
Department of Mathematics, Texas A\&M University, College Station, TX 77843-3368, USA.
Email: {\tt wytgloria@math.tamu.edu}.
}
}
\begin{document}
\maketitle

\begin{abstract}
In this paper, we present a new multiscale model reduction technique
for the Stokes flows in heterogeneous perforated domains. 
The challenge in the numerical simulations of this problem lies in the fact that
the solution contains many multiscale features and requires
a very fine mesh to resolve all details. 
In order to efficiently compute the solutions, some model reductions are necessary. 
To obtain a reduced model, we apply the generalized multiscale finite element approach,
which is a framework allowing systematic construction of reduced models.
Based on this general framework, 
we will first construct a local snapshot space, which contains many possible multiscale features of the solution.
Using the snapshot space and a local spectral problem, we identify dominant modes in the snapshot space
and use them as the multiscale basis functions. 
Our basis functions are constructed locally with non-overlapping supports, which enhances the sparsity of the resulting linear system.
In order to enforce the mass conservation, we propose a hybridized technique,
and uses a Lagrange multiplier to achieve mass conservation. 
We will mathematically analyze the stability and the convergence of the proposed method.
In addition, we will present some numerical examples
to show the performance of the scheme.
We show that, with a few basis functions per coarse region, one can obtain a solution
with excellent accuracy.
\end{abstract}

\section{Introduction}

Many application problems, such as fluid flow in heterogeneous porous media,  
involve perforated domains (see Figure \ref{fig:domain} for an example of perforated domain) where the perforations can have various sizes and geometries. Due to these features, the solutions of differential equations posed in perforated domains
have multiscale properties. Numerical simulations for these problems are prohibitively expensive, because the computational cost to recover the fine scale properties between perforations
is extremely high. Similar to other types of multiscale problems, some model reduction methods are necessary in order to improve the computational efficiency. 
There are in literature many model reduction techniques that 
are performed on a coarse grid which has much larger length scale compared with the size of perforations, such as numerical homogenization 
(\cite{allaire1991homogenization,maz2010asymptotic,Jikov91,oleinik1996homogenization,fratrovic2015nonlinear,pankratova2015spectral,allaire2015upscaling,bare2015non,gilbert2014acoustic,muntean2012analysis,gilbert2006prototype,gilbert2003acoustic,sanchez1980non}) and multiscale methods (\cite{henning2009heterogeneous, hw97, ReducedCon, egh12, CELV2015,eh09,Iliev_MMS_11, Chu_Hou_MathComp_10, ab05, Babuska,Bris14, brown2014multiscale}). In these approaches, macroscopic equations are formulated on a coarse grid with mesh size independent of the size of perforations. 
While these approaches are excellent in some cases, they are lack of systematic enrichment strategies
in order to tackle problems with more complicated structures. 

The recently developed
Generalized multiscale finite element method (GMsFEM) \cite{egh12,ceh2016adaptive} is a framework that allows systematic enrichment of the coarse spaces and take into account fine scale information for the construction of these spaces. The framework therefore provides a convincing approach to solve problems posed in heterogeneous perforated domains, whose solutions have multiscale features and require sophisticated enrichment techniques. The main idea of GMsFEM is to employ local snapshots to approximate the fine scale solution space, and then identify local multiscale spaces by performing some carefully selected local spectral problems defined in the snapshot spaces. 
The spectral problems give a systematic strategy to identify the dominant modes in the snapshot spaces, and
the dominant modes are selected to form the local multiscale spaces. 
By appropriately choosing the snapshot space and the spectral problem, the GMsFEM requires only a few basis functions per coarse region
in order to obtain solutions with excellent accuracy.
In \cite{CELV2015,chung2016online}, we have developed and analyzed a GMsFEM for elliptic problem, elastic problem and the Stokes problem in perforated domains using
the continuous Galerkin (CG) framework. For this CG approach, we partition the computational domain as a union of overlapping coarse neighborhoods, and 
construct a set of local multiscale basis functions for each coarse neighborhood. 
We also developed an adaptivity procedure based on local residuals to enrich the coarse space by adaptively adding new basis functions. 
However, one drawback of the CG approach is the need to multiply each basis function by 
a partition of unity function. This step may modify the local heterogeneity and cause some difficulties. 

In this paper, we propose a new GMsFEM for problems in perforated domains using a 
discontinuous Galerkin (DG) approach. The use of the DG approach in GMsFEM has been successfully developed for many problems, such as the elliptic equations and the wave equations with heterogeneous coefficients (\cite{chung2015online, AdaptiveGMsDGM, ElasticGMsFEM, WaveGMsFEM, chung2015fracture}). 
The main feature of the DG approach is that the basis functions are constructed locally for each non-overlapping coarse region.
This fact allows much more flexibility in the design of the coarse mesh and in the choice of the local multiscale space. 
Another advantage of the DG approach is that there is no need to construct and use any partition of unity functions. 
We will, in this paper, consider a GMsFEM based on a DG approach for 
the Stokes flows in heterogeneous perforated domains. 
To construct the multiscale basis functions, we will obtain the local snapshots by solving the Stokes equations
for each non-overlapping coarse region with some suitable boundary conditions. 
Then, we will construct local spectral problems and identify dominant modes in the snapshot space. 
The multiscale space is obtained by the span of all these dominant modes.
Furthermore, it is important to note that the mass conservation is a crucial property for the Stokes flow. 
By the construction of the basis functions, the multiscale solution satisfies some local mass conservation property
within coarse regions. 
However, mass conservation does not in general hold globally in the coarse grid level.
To tackle this issue, we construct a hybridized scheme and introduce additional pressure variables
on the coarse grid edges. This additional pressure variable serves as a Lagrange multiplier
to enforce the mass conservation property in the coarse grid level. 
Thus, our new GMsFEM provides solutions using only few basis functions per coarse regions, and
having both local and global mass conservation. 

%However, we will consider stokes problems in the perforated domain in this paper. In addition, different from the classical IPDG method used in some of these works, we introduce a new term in the variational form in order to enforce local mass conservation on coarse edges. The advantages of our proposed approach inheritate the advantage of classical DG approach. It allows a much greater flexibility in the design of the mesh and in the choice of the approximation space. It uses non-overlapping partitions and avoid constructing partion of unity functions. Moreover, it mantains exact local mass conservation on fine edges as well as coarse edges. 

To investigate the performance of our proposed method, 
we will numerically study the Stokes problem in various perforated domains (see Figure \ref{fig:mesh}) with various choices of boundary conditions and forcing terms. We will present the construction of the snapshot space using both the standard 
and the oversampling approaches (\cite{eglp13oversampling, randomized2014}). Local spectral decompositions are also proposed for various approaches of snapshots correspondingly. Moreover, when constructing multiscale basis, we will test the use of different shapes of coarse blocks for different types of perforated domains. Numerical results are presented and convergence of the method is analyzed. Moreover, we will numerically show that the local mass conservation property is satisfied by the multiscale solution. Our numerical results show that we can approximate the solution using a fairly small degrees of freedom. In addition, the oversampling technique can be particularly helpful and improve the accuracy and the convergence.

We organize the paper as follows. In Section \ref{sec:prelim}, we state the model problem and define the fine and coarse scale discretizations. We present the detailed constructions of the snapshot space and the offline space in Section \ref{sec:constr}. Section \ref{sec:numerical} presents the numerical results for various examples. We analyze the stability and the convergence of our method in Section \ref{sec:analysis}. A conclusion is given at the end of the paper.

\section{Problem settings}\label{sec:prelim}
In this section, we state the Stokes flow in heterogeneous perforated domains and introduce 
some notations.
Let $\Omega \subset \mathbb{R}^n$ ($n = 2,3$) be a bounded domain. We define a perforated domain $\Omega^{\epsilon} \subset \Omega$ with 
a set of perforations denoted by $\mathcal{B}^{\epsilon}$, that is, $\Omega^{\epsilon} = \Omega \backslash \mathcal{B}^{\epsilon}$.
We assume that the set $\mathcal{B}^{\epsilon}$ contains circular perforations with various sizes and positions. 
%multiple sizes of circular inclusions denoted by $\mathcal{B}^{\epsilon}$, i.e. $\Omega^{\epsilon} = \Omega \backslash \mathcal{B}^{\epsilon}$. 
An illustration of a perforated domain is shown in Figure \ref{fig:domain}. 
We notice that the variable sizes and positions of these perforations lead to some multiscale features in the solutions of the problems posed in perforated domains. Given 
the source function $f$ and two boundary functions $g_D, g_N$, we consider the following Stokes flow in the perforated domain $\Omega^{\epsilon}$:
\begin{equation}\label{eq:stokes}
\begin{aligned}
-\Delta u + \nabla p &= f, \quad &\text{in } \Omega^{\epsilon} \\
\div u &=0, \quad &\text{in } \Omega^{\epsilon}
\end{aligned}
\end{equation}
subject to boundary condition $u = g_D$ on $\Gamma_D$, and $(\nabla u - pI)n = g_N$ on $\Gamma_N$, where $\Gamma_D \cup \Gamma_N = \partial \Omega^{\epsilon}$, $n$ is the unit outward normal vector on $\partial \Omega^{\epsilon}$ and $I$ is the $n\times n$ identity matrix. 
The unknown variable $u$ denotes the fluid velocity and $p$ denotes the fluid pressure. Since $p$ is uniquely defined up to a constant, we assume that $\int_{\Omega^{\epsilon}} p = 0$,
so that the problem (\ref{eq:stokes}) has a unique solution. 
%To simplify the notations, we omit the

\begin{figure}[!h]
	\begin{center}
	    \includegraphics[width=0.3\textwidth]{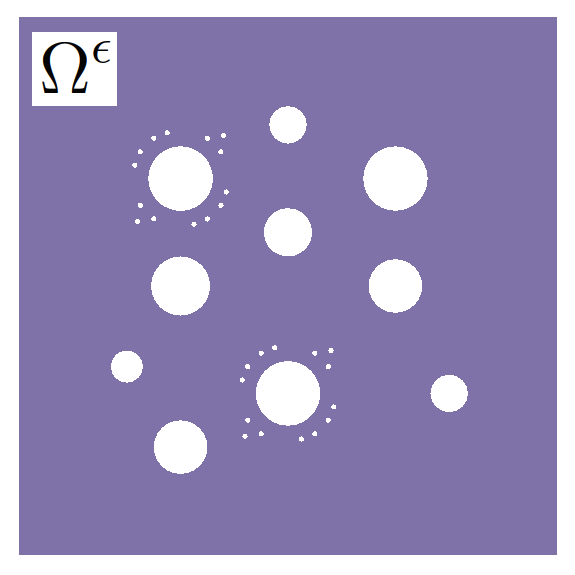}
    \end{center}    
    \caption{An illustration of a perforated domain.}
    \label{fig:domain}
\end{figure}

Let $V(\Omega^{\epsilon}) = H_0^1(\Omega^{\epsilon})^n$ and $Q(\Omega^{\epsilon}) = L^2_0(\Omega^{\epsilon})$,
where $ L^2_0(\Omega^{\epsilon})$ is the set of $L^2$ functions defined in $\Omega^{\epsilon}$ with zero mean.
The variational formulation of \eqref{eq:stokes} is given by: find $u \in V(\Omega^{\epsilon})$ and $p \in Q(\Omega^{\epsilon})$ such that 
\begin{equation}\label{eq:stokes-weak}
\begin{aligned}
 a(u,v) + b(v, p) &= (f, v), \quad \text{for all} \quad v \in V(\Omega^{\epsilon})\\
 b(u,q) &= 0, \quad \text{for all} \quad q \in Q(\Omega^{\epsilon})
\end{aligned}
\end{equation}
where \[
a(u, v) = \int_{\Omega^{\epsilon}} \nabla u : \nabla v, \quad b(v, q) = - \int_{\Omega^{\epsilon}} q \, \div v 
\]
and 
\[
(f,v) = \int_ {\Omega^{\epsilon}} f v.
\]
It is well known that there is a unique weak solution to \eqref{eq:stokes-weak} (see for example \cite{Brezzi_Fortin_book}).

For the numerical approximation of the above problem, we first introduce the notations of fine and coarse grids.
Let $\mathcal{T}^H$ be a coarse-grid partition of the domain
$\Omega^{\epsilon}$ with mesh size $H$.  
We assume that this coarse mesh does not necessarily resolve the full details of the perforations. 
%Remark that the inclusions in the perforated domain will not make the coarse blocks unconnected. 
By using a conforming
refinement of the coarse mesh $\mathcal{T}^H$, we can obtain
a fine mesh $\mathcal{T}^h$ of $\Omega^{\epsilon}$ with mesh size $h$.
Typically, we assume that $0 < h \ll H < 1$, and that the fine-scale mesh $\mathcal{T}^h$
is sufficiently fine to fully resolve the small-scale information of the domain, and $\mathcal{T}^H$ is a coarse mesh containing many fine-scale feature. 
We use the notations $K$ and $E$ to denote a coarse element and a coarse edge in the coarse grid $\mathcal{T}^H$.
%Note that, we usually use $K^{\epsilon}$ to denote the coarse element (coarse block) in the coarse mesh  $\mathcal{T}^H$, but the notation $K$ was sometimes used for simplicity. The coarse edges are denoted by $E$.

We let $\mathcal{E}^H$ be the set of edges in $\mathcal{T}^H$.
We write $\mathcal{E}^H = \mathcal{E}^H_{int} \cup \mathcal{E}^H_{out}$,
where $\mathcal{E}^H_{int}$ is the set of interior edges
and $\mathcal{E}^H_{out}$ is the set of boundary edges. 
%Denote by $\mathcal{E}^H$ be the set of interior coarse scale edges. Define 
%\[
%\mathcal{E}^H_{int} = \{E|\; E = \partial K \cap \partial K', \text{ for } K, K' \in \mathcal{T}^H \}
%\]
%\[
%\mathcal{E}^H_{out} = \{E|\; E = \partial K \cap \partial \Omega^{\epsilon}, \text{for} K \in \mathcal{T}^H \}
%\]
%and $\mathcal{E}^H = \mathcal{E}^H_{int} \cup \mathcal{E}^H_{out}$.
For each interior edge $E\in \mathcal{E}^H_{int}$, we define the jump $[u]$ and the average $\{u\}$ of a function $u$ by
\[
[u]_E = u|_{K^{+}} - u|_{K^{-}}, \;\; \{u\}_E = \frac{u|_{K^{+}} + u|_{K^{-}}}{2},
\]
where $K^+$ and $K^-$ are the two coarse elements sharing the edge $E$,
and the unit normal vector $n$ on $E$ is defined so that $n$ points from $K^+$ to $K^-$.
%and define $n$ as the unit normal from $K^{+}$ to $K^{-}$.  
For $E \in \mathcal{E}^H_{out}$, we define
\[
[u]_E = u|_E,\;\;  \{u\}_E = u|_E.
\]
%and $n$ as the unit outward normal of $\partial \Omega^{\epsilon}$.

Next we introduce our DG scheme. Similar to the standard derivation of DG formulations \cite{ABCM_unified_2002, Ewing_WangYang2003, Laz_PShoebV_2003, Laz_Tom_Vas_2001}, the main idea is to consider the problem in each element $K$ in the coarse mesh, and impose boundary conditions weakly on $\partial K$ using the value of the velocity function in the neighboring elements. In addition, a penalizing term which penalize the jump of velocity will be introduced.  After obtaining the local problems in each element, one can sum over all elements to get the global DG scheme. Remark that, in our approach, we will only assume discontinuity across the coarse edges, but use the standard continuous element inside coarse blocks. In this work, we also add an additional Lagrange multiplier in order to impose local mass conservation on the coarse elements. The details are given as follows.

We start with the definitions of the approximation spaces. 
We let $Q_H$ be the piecewise constant function space for the approximation of the pressure $p$. 
That is, the restriction of the functions of $Q_H$ in each coarse element is a constant. 
In addition, we will define a piecewise constant space $\widehat{Q}_H$ for the approximation of the pressure $ \widehat{p}$, which is defined on the set of coarse edges $\mathcal{E}^H$.
That is, the functions in $\widehat{Q}_H$ are defined only in $\mathcal{E}^H$
and the restriction of the functions of $\widehat{Q}_H$ in each coarse edge is a constant.
We remark that this additional pressure space is used to enforce local mass conservation in the coarse grid level. 
Moreover, we define $V_H$ as the multiscale velocity space, which contains a set of basis functions supported in each coarse block $K$. To obtain these basis functions, we will solve some local problems in each coarse block with various Dirichlet boundary conditions to form a snapshot space
and use a spectral problem to perform a dimension reduction. 
%Denote by $\phi_k$ a set of basis in $K_i^{\epsilon}$, then we define $V_H$ as the linear span of all local basis functions.
The details for the construction of this space will be presented in the next section.

For our GMsFEM using a DG approach, 
we define the bilinear forms
\begin{equation}
a_{\text{DG}}(u,v) = \int_{\Omega^{\epsilon}} \nabla u : \nabla v
- \sum_{E\in\mathcal{E}^H} \Big( \int_E \{ (\nabla u) \, n \}\cdot [v] + \{ (\nabla v)  n\}  \cdot [u] \Big)
+ \frac{\gamma}{h} \sum_{E \in\mathcal{E}^H} \int_E [u]\cdot [v],
\end{equation}
\begin{equation}
b_{\text{DG}}(v,q, \widehat{q}) = - \sum_{K\in\mathcal{T}^H} \int_K q \, \div v  +  \sum_{E \in\mathcal{E}^H} \int_E \widehat{q} \, ([v]\cdot n).
\end{equation}
%Remark that in the bilinear forms, we sum over all coarse edges, including interior edges and boundary edges, this is because we will impose the Dirichlet boundary or Neumann boundary weakly.
%In bilinear form $a_{\text{DG}}$, $\gamma$ is the penalty parameter, the last term penalize the jump of the function $u$(or $v$). 
%Note that $\widehat{q}$ defines only on coarse edges, and the last term in $b_{\text{DG}}$ enforces local mass conservation on the coarse edges.
Then,
we will find the multiscale solution $(u_H, p_H, \widehat{p}_H)\in V_H \times Q_H \times \widehat{Q}_H$ such that 
\begin{equation}\label{eq:coarse-scale}
\begin{aligned}
a_{\text{DG}}(u_H,v) + b_{\text{DG}}(v,p_H,\widehat{p}_H) &= (f,v) + \int_{\Gamma_D} \Big(\frac{\gamma}{h} g_D \cdot v -  ((\nabla v) \, n) \cdot g_D \Big) 
+\int_{\Gamma_N} g_N \cdot v,\\
b_{\text{DG}}(u_H,q, \widehat{q}) &= \int_{\Gamma_D} (g_D\cdot n) \, \widehat{q},
\end{aligned}
\end{equation}
for all $v \in V_H,  \, q\in Q_H,  \, \widehat{q}\in\widehat{Q}_H$.
The derivation of the above scheme follows the standard DG derivation procedures \cite{ABCM_unified_2002, Ewing_WangYang2003, Laz_PShoebV_2003, Laz_Tom_Vas_2001}.
We notice that the role of the variable $\widehat{p}_H$ is to enforce mass conservation on coarse elements. 
In particular, taking $q=0$ in (\ref{eq:coarse-scale}), we have
\begin{equation*}
\int_E \widehat{q}\, [u_H]\cdot n = 0, \quad \forall E\in\mathcal{E}^H_{int}, \quad \forall \, \widehat{q} \in \widehat{Q}_H.
\end{equation*}
This relation implies that
\begin{equation*}
\int_K q \, \div u_H = 0, \quad \forall K \in \mathcal{T}^H, \quad \forall q\in Q_H. 
\end{equation*}
The above is the key to the mass conservation, and we will discuss more in the numerical results section.

We will show the accuracy of our method by comparing the multiscale solution to a reference solution,
which is computed on the fine mesh. 
To find the reference solution $(u_h,p_h,\widehat{p}_h)$, we will solve the following system
\begin{equation}\label{eq:fine-scale}
\begin{aligned}
a_{\text{DG}}(u_h,v) + b_{\text{DG}}(v,p_h, \widehat{p}_h) &= (f,v)+  \int_{\Gamma_D} \Big(\frac{\gamma}{h} g_D \cdot v -  ((\nabla v) \, n) \cdot g_D \Big) 
+\int_{\Gamma_N} g_N \cdot v, \\
b_{\text{DG}}(u_h,q, \widehat{q}) &=\int_{\Gamma_D} (g_D\cdot n) \, \widehat{q},\\
\end{aligned}
\end{equation}
for all $v\in V_h^{\text{DG}}, q\in Q_H, \widehat{q}\in \widehat{Q}_H $.
We note that the reference velocity $u_h$ belongs to
the fine scale velocity space $V_h^{\text{DG}} = \{v \in L^2(\Omega^{\epsilon})| \;\; v|_{K} \in C^0(K)^2 \; \text{for every } K\in\mathcal{T}^H, \;
v|_{K} \in (\mathbb{P}_1(T))^2 \; \text{for every } K \in \mathcal{T}^h\}$. The space $V_h^{\text{DG}} $ contains functions which are piecewise linear in each fine-grid element $K$ and are continuous along the fine-grid edges, but are discontinuous across coarse grid edges.
Moreover, the reference pressure $p_h$ and $\widehat{p}_h$ belongs to the coarse scale pressure space $Q_H$ and $\widehat{Q}_H$ respectively.
%The definition for $\widehat{Q}_h$ is similar. 
Notice that the pressure $p_h$ is determined up to a constant, we will achieve the uniqueness by requiring the averaging value of pressure over whole domain is zero. 
We remark that this reference solution $(u_h,p_h,\widehat{p}_h)$ is obtained using the coarse scale pressure spaces $Q_H$ and $\widehat{Q}_H$
since we only consider multiscale solutions and reduced spaces for the velocity.
The true fine scale solution $(u_{\text{fine}},p_{\text{fine}},\widehat{p}_{\text{fine}})$ can be defined by
\begin{equation*}
\begin{aligned}
a_{\text{DG}}(u_{\text{fine}},v) + b_{\text{DG}}(v,p_{\text{fine}}, \widehat{p}_{\text{fine}}) &= (f,v)+  \int_{\Gamma_D} \Big(\frac{\gamma}{h} g_D \cdot v -  ((\nabla v) \, n) \cdot g_D \Big) 
+\int_{\Gamma_N} g_N \cdot v, \\
b_{\text{DG}}(u_{\text{fine}},q, \widehat{q}) &=\int_{\Gamma_D} (g_D\cdot n) \, \widehat{q},\\
\end{aligned}
\end{equation*}
for all $v\in V_h^{\text{DG}}, q\in Q_h, \widehat{q}\in \widehat{Q}_h $,
where $Q_h$ and $\widehat{Q}_h$ are suitable fine scale spaces. 
One can see that $(u_{\text{fine}}, p_{\text{fine}})$ will converge to the exact solution $(u, p)$ in the energy norm as the fine mesh size $h \rightarrow 0$. 
Moreover, one can show that 
\begin{equation*}
\|u_{\text{fine}} - u_h\|_A^2 \leq C \inf_{q\in Q_H, \widehat{q} \in \widehat{Q}_H} \| (p_{\text{fine}}-q, \widehat{p}_{\text{fine}} - \widehat{q})\|_Q^2
\end{equation*}
where the norms are defined in (\ref{eq:A-norm}) and (\ref{eq:q-norm}).
Thus, the reference solution defined in (\ref{eq:fine-scale})
can be considered as the exact solution up to a coarse scale approximation error.
%We will consider the fine-scale solution as the reference solution, and compare the multiscale solution against them to investigate the accuracy of our GMsFEM method. The detailed construction of multiscale basis and space will be shown in the next section.

\section{Construction of multiscale velocity space}\label{sec:constr}

In this section, we will present the construction of the multiscale space $V_H$ for the coarse scale approximation of velocity. To construct the coarse scale velocity space, we will follow the general idea of GMsFEM \cite{egh12, eglp13oversampling}, which contains two stages: (1) the construction of snapshot space, and (2) the construction of offline space. In the first stage, we will obtain the snapshot space, which contains a rich set of functions containing possible features in the solution. These snapshot functions are solutions of some local problems subject to all possible boundary conditions up to the fine grid resolution. Notice that for the generalized multiscale DG scheme proposed in \cite{AdaptiveGMsDGM, MsDG}, one solves the local problems in each coarse block. Thus the resulting system is much smaller compared with that of the CG approach \cite{CELV2015, ElasticGMsFEM, AdaptiveGMsFEM}, where the local problems are solved in each overlapping coarse neighborhood. Next, in order to reduce the dimension of the solution space, we will use a space reduction technique to choose the dominated modes in the snapshot space. This procedure is achieved by defining proper local spectral problems. The resulting reduced order space is called the offline space and will be used for coarse scale velocity approximation. Note that for approximating pressure on the coarse grid, we will use piecewise constant functions as defined before. 
In Section \ref{snap}, we will present the construction of the snapshot space,
and in Section \ref{off}, we will present the construction of the offline space.

\subsection{Snapshot space}\label{snap}
We will construct local snapshot basis in each coarse block  $K_i, (i = 1, \cdots, N)$, where $N$ is the number of coarse blocks in $\Omega^{\epsilon}$. The local snapshot space consists of functions which are solutions $u \in V_h(K_i)$ of 
\begin{equation}\label{eq:hmex}
\begin{aligned}
-\Delta u + \nabla p &= 0, \quad &\text{in } K_i \\
\div u &=c, \quad &\text{in }  K_i
\end{aligned}
\end{equation}
with $u=\delta_i^k$ on $\partial K_i$, ($k= 1, \cdots, M_i$), where $M_i$ is the number of fine grid nodes on the boundary of $K_i$,
and $\delta_i^k$ is the discrete delta function defined on $\partial K_i$.
The above problem (\ref{eq:hmex}) is solved on the fine mesh using some appropriate approximation spaces.
For instances, we take the space $V_h(K_i)$ to be the standard conforming piecewise linear finite element space with respect to the fine grid on $K_i$. 
%Also, $\delta_i^k \in V_h(\partial K_i^{\epsilon})$ is a piecewise constant function which has value $1$ on fine grid $e_k$ and $0$ on other fine grid edges in $\partial K_i^{\epsilon}$.
Note that the constant $c$ in \eqref{eq:hmex} is chosen by the compatibility condition, that is, $c = \frac{1}{|K_i|}\int_{\partial K_i} \delta_i^k \cdot n ds$.

Take these $M_i$ velocity solutions of \eqref{eq:hmex} and denote them by $\psi_k^{i, \text{snap}}(k = 1, \cdots, M_i)$, we get the local snapshot space
\[
V^i_{\text{snap}} = \text{span}\{\psi_1^{i, \text{snap}}, \cdots, \psi_{M_i}^{i, \text{snap}} \}.
\]
Combining all the local snapshots, we can form the global snapshot space, that is
\[
V_{\text{snap}} = \text{span}\{\psi_k^{i, \text{snap}}, \quad, 1\leq k\leq M_i, 1\leq i \leq N \}.
\]

In the above construction, the local problems are solved for every fine grid node on $\partial K_i$. One can also apply the oversampling strategy \cite{Babuska, eglp13oversampling} in order to reduce the boundary effects. Applying this strategy, one can solve the local problem for each fine node on the boundary of the oversampled domain. An illustration of the original local domain $K$ and the oversampled local domain $K^{+}$ are shown in Figure \ref{fig:illus-os}. Notice that in Figure \ref{fig:illus-os}, we present the triangular coarse grid in perforated domain with small inclusions on the left, and rectangular coarse grid perforated domain with multiple sizes of inclusions on the right. We will solve the local problem in an enlarged domain $K_i^{+}$ of $K_i$, 
\begin{equation*}
\begin{aligned}
-\Delta u + \nabla p &= 0, \quad &\text{in } K_i^{+} \\
\div u &=c, \quad &\text{in }  K_i^{+}
\end{aligned}
\end{equation*}
with $u=\delta_i^k$ on $\partial K_i^{+}$, where $k= 1, \cdots, M_i^{+}$, where $M_i^{+}$ is the number of fine nodes on the boundary of $K_i^{+}$.  After removing linear dependence among these basis by POD, we denote the linearly independent functions  by $\psi_k^{+, i}, \;(i = 1, \cdots, \tilde{M}_i)$. Note that the velocity solutions of these local problems are supported in the larger domain $K_i^{+}$. There are usually several following choices for identification of basis.  One of the straight forward way is that, we can restrict the basis  $\psi_k^{+, i}$ on $K_i$ to form the snapshot basis, i.e. $\psi_k^{i, \text{snap}} = \psi_k^{+, i}|_{K_i}$. Then the span of these basis function $\psi_k^{i, \text{snap}}$ will form our new snapshot space. In this case, the local reduction will be performed in  $K_i$. Another choice is that, one can keep the snapshot basis $\psi_k^{+, i}$ without restricting on $K_i$. But in this case, one needs to solve the offline basis also in the oversampled domain $K_i^{+}$ and finally restrict the offline basis on the original local domain $K_i$. It is known that these oversampling methods can improve the accuracy of our multiscale methods (\cite{eglp13oversampling}). 

We remark that one can also use the idea of randomized snapshots (as in \cite{randomized2014}) and reduce the computational cost substantially. In randomized snapshots approach, instead of solving the local problem for each fine node on the boundary of oversampled local domain, one only computes a few snapshots in each oversampled domain $K_i^{+}$ with several random boundary conditions. These random boundary functions are constructed by independent identically distributed (i.i.d.) standard Gaussian random vectors defined on the fine degrees of freedoms on the boundary. The randomized snapshot requires much fewer calculations to achieve a good accuracy compared with the standard snapshot space. 

%To sum up, in this section, we first introduce the construction of standard snapshot space which contains stokes extension functions, and then present the oversampling snapshots which can result in a faster convergence, finally the randomized oversampling method is mentioned to provide with a more efficient and effective choice. However, the snapshot space contains quite extensive number of basis functions. In order to improve computational efficiency, we need to employ some space reduction technique to form a subspace which can approximate the snapshot space accurately.

\begin{figure}[!h]
	\begin{center}
	    \includegraphics[width=0.7\textwidth]{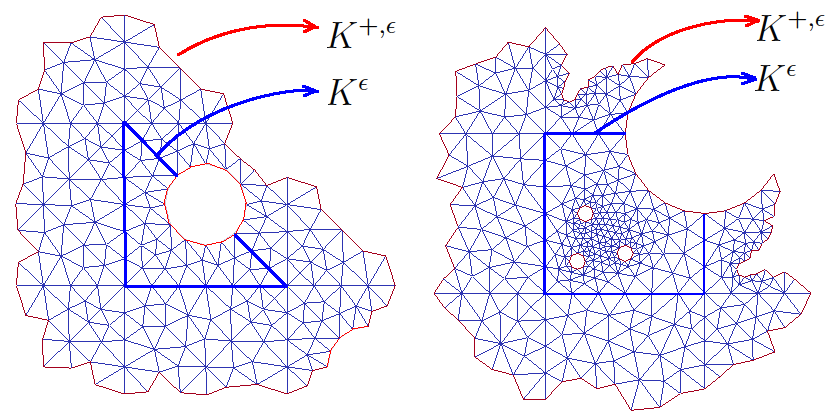}
    \end{center}   
    \caption{Illustration of oversampling domain. Left: Oversampling of a triangular coarse block for perforated domain with small inclusions. Right: Oversampling of a rectangular coarse block for perforated domain with multiple sizes of inclusions.}
    \label{fig:illus-os}
\end{figure}

\subsection{Offline space}\label{off}
In this section, we will perform local model reduction on the snapshot space by solving some local spectral problems. The reduced space consists of the important modes in the snapshot space, and is called the offline space. The coarse scale approximation of velocity solution will be obtained in this space. We have multiple choices of local spectral problems given the various constructions of snapshot space presented in the previous section. 

First of all, if the snapshot basis obtained in the previous section is supported in each coarse element $K_i$, we will solve for $(\lambda, \Phi)$ from the generalized eigenvalue problem in the snapshot space 
\begin{equation}\label{eq:eig}
A \Phi = \lambda S \Phi
\end{equation}
where $A$ is the matrix representation of the bilinear form $a_i(u, v)$ and $S$ is the matrix representation of the bilinear form $s_i(u, v)$.
The choices for $a_i$ and $s_i$ are based on the analysis. 
In particular, we take
\begin{align*}
a_i(u,v)&= \int_{K_i} \nabla u : \nabla v, \\
s_i(u,v)&= \frac{\lambda}{H} \int_{\partial K_i} u\cdot v,
\end{align*}
where we remark that the integral in $s_i(u,v)$ is defined on the boundary of the coarse block. In this case, the number of the spectral problem  equals the number of coarse blocks. 
%One can also consider
%\begin{align*}
%a_i(u,v)&= \int_{K_i^{\epsilon}} \nabla u : \nabla v,\\
%s_i(u,v)&= \frac{\lambda}{H} \int_{E_j^i} u\cdot v
%\end{align*}
%where $E_j^i$ is an edge on $\partial K_i^{\epsilon}$. In this case, the
%number of spectral problem equals the number of boundary edges in all coarse blocks. 
%Remark that in our presented numerical example, we use the first spectral problem. 

We arrange the eigenvalues of (\ref{eq:eig}) in increasing order. 
%The local spectral problems have a set of increasing eigenvalues. 
We will choose the first few eigenvectors corresponding to the first few small eigenvalues. Using these eigenvectors as the coefficients, we can form our offline basis. More precisely, assume we arrange the eigenvalues in increasing order
\[
\lambda_1^{(i)} < \lambda_2^{(i)} < \cdots < \lambda_{M_i}^{(i)}. 
\]
The corresponding eigenvectors are denoted by $\Phi_k^{(i)} = (\Phi_{kj}^{(i)})_{j=1}^{M_i}$, where $\Phi_{kj}^{(i)}$ is the $j$-th component of the eigenvector. 
We will take the first $L_i \leq M_i$ eigenvectors to form the offline space, that is, the offline basis functions can be constructed as
\[
\phi_k^{i, \text{off}} = \sum_{j=1}^{M_i}\Phi_{kj}^{(i)} \psi_k^{i, \text{snap}}, \;\; k = 1, \cdots, L_i. 
\]

On the other hand, 
one can use the snapshot basis $\psi_k^{+, i}$ (using oversampling strategy) without restricting on $K_i$ in the space reduction process. To be more specific, since the snapshot basis are supported in the oversampled domain  $K_i^{+}$, we will need another set of spectral problems, namely
\begin{equation}\label{eq:eig-os}
A^+ \Phi^+ = \lambda S^+ \Phi^+
\end{equation}
where $A^+$ and $S^+$ are the matrix representations of the bilinear forms $a_{+,i}(u, v)$ and $s_{+,i}(u, v)$ respectively. 
Similar as before, we can choose $a_{+,i}, s_{+,i}$ as follows
\begin{align*}
a_{+,i}(u,v)&= \int_{K_i^{+}} \nabla u : \nabla v, \\
s_{+,i}(u,v)&= \frac{\lambda}{H} \int_{\partial K_i^{+}} u\cdot v.
\end{align*}
%or
%\begin{align*}
%a_{+,i}(u,v)&= \int_{K_i^{+, \epsilon}} \nabla u : \nabla v,\\
%s_{+,i}(u,v)&= \frac{\lambda}{H} \int_{E_j^{+,i}} u\cdot v
%\end{align*}
%where $E_j^{+,i}$ is an edge on $\partial K_i^{+, \epsilon}$.
We then arrange the eigenvalues in increasing order
\[
\lambda_1^{(i)} < \lambda_2^{(i)} < \cdots < \lambda_{M_i^+}^{(i)}. 
\]
The corresponding eigenvectors are denoted by $\Phi_k^{+,(i)}$. 
We will take the first $L_i \leq M_i^+$ eigenvectors to form a basis supported in $K_i^{+}$
\[
\phi_k^{+, i} = \sum_{j=1}^{M_i^+}\Phi_{kj}^{+,(i)} \psi_k^{+, i}, \;\; k = 1, \cdots, L_i. 
\]
Then we will obtain our offline basis by restricting $\phi_k^{+, i}$ on $K_i$, namely
\[
\phi_k^{i, \text{off}} = \phi_k^{+, i}|_{K_i}.
\]

Now we can finally form the local offline space, which is the span of these basis functions
\[
V^{i}_{\text{off}} = \text{span}\{\phi_1^{i, \text{off}}, \cdots, \phi_{L_i}^{i, \text{off}}\}.
\]
The global offline space $V_{\text{off}}$ is the combination of the local ones, i.e.
\[
V_{\text{off}} = \text{span}\{\phi_k^{i, \text{off}},  \quad, 1\leq k\leq L_i, 1\leq i \leq N\}.
\]
This space will be used as the coarse scale approximation space for velocity $V_H := V_{\text{off}}$. 

%In summary, in this subsection, we define several local spectral decomposition appropriately for different types of snapshot basis obtained before. If the snapshot basis are the ones supported in non-oversampled domian $K_i^{\epsilon}$, we will employ spectral problem \eqref{eq:eig} and obtain offline basis also supported in $K_i^{\epsilon}$. Otherwise, when snapshot basis are the ones supported in oversampled domian $K_i^{+, \epsilon}$, the spectral problem \eqref{eq:eig-os} will be selected. In the second case, in order to get the appropriate offline basis, a restriction step will be performed then. Till now, we have finished constructing the coarse scale spaces. The DG coupling will be applied to get approximate solutions.  

\section{Numerical results}\label{sec:numerical}
In this section we will present numerical results of our method for various types of perforations, boundary conditions and sources. We will illustrate the performance of our method using two kinds of perforated domains: (1) perforated domain with small inclusions and (2) perforated domain with big inclusions as well as some extremely small inclusions, see Figure \ref{fig:mesh}. 
We will also illustrate the performance of the oversampling strategy. 

We set $\Omega = [0,1] \times [0,1]$. The computational domain is discretized coarsely using uniform triangulation for domain with small inclusions (Figure \ref{fig:mesh}, left), and uniform rectangle coarse partition for domain with big inclusions (Figure \ref{fig:mesh}, right). The coarse mesh size $H=\frac{1}{10}$. For the fine scale discretization, the size of the system is $69146$ for domain with small inclusions (Figure \ref{fig:mesh}, left) and $91588$ for domain with multiple size of inclusions (Figure \ref{fig:mesh}, right).

\begin{figure}[!h]
	\begin{center}
	    \includegraphics[width=0.35\textwidth]{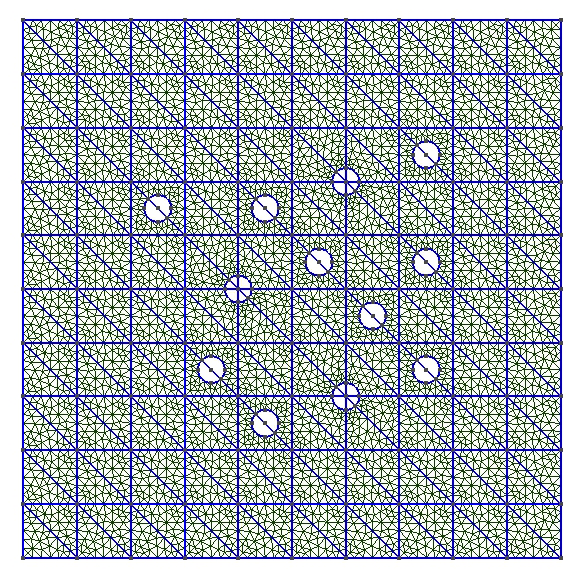}
        \includegraphics[width=0.35\textwidth]{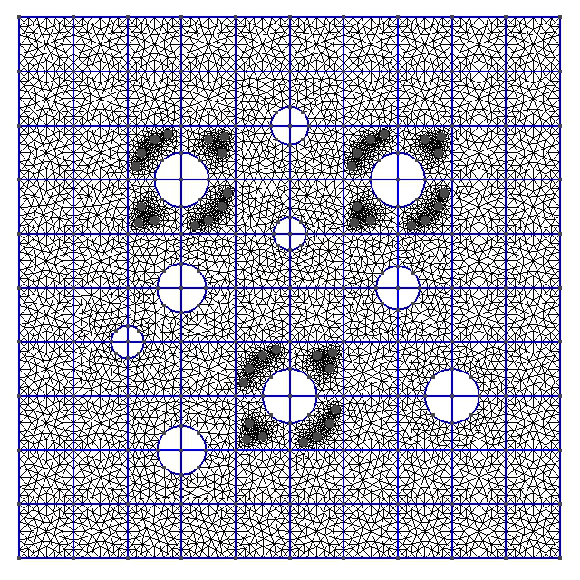}
    \end{center}    
    \caption{Illustration of the perforated domain with fine and coarse mesh. Left: perforated domain with small inclusions.  Right: perforated domain with multiple sizes of inclusions.}
    \label{fig:mesh}
\end{figure}

We will consider two different boundary conditions and force terms:
\begin{itemize}
\item \textit{Example 1}: Source term $f = (0,0)$, boundary condition $u = (1, 0)$ on $\partial{\Omega}$ and $u = (0, 0)$ on $\partial{\mathcal{B}^{\epsilon}}$. 
\item \textit{Example 2}: Source term $f = (1,1)$, boundary condition $\frac{\partial u}{\partial n}-pn = (0, 0)$ on $\partial{\Omega}$ and $u = (0, 0)$ on $\partial{\mathcal{B}^{\epsilon}}$. 
\end{itemize}

The errors will be measured in relative $L^2$, $H^1$ and $DG$ norms for velocity, and $L^2$ norm for pressure
\begin{align*}
||e_u||_{L^2} & =\frac{ \norm{u_h-u_H}_{L^2(\Omega^{\epsilon})}}{\norm{u_h}_{L^2(\Omega^{\epsilon})}} ,\\ 
||e_u||_{H^1} &=\frac{ \norm{u_h-u_H}_{H^1(\Omega^{\epsilon})} }{\norm{u_h}_{H^1(\Omega^{\epsilon})}} , \quad
||e_u||_{DG} = \frac{ \sqrt{a_{\text{DG}}(u_h - u_H, u_h-u_H)}} { \sqrt{a_{\text{DG}}(u_h, u_h)} }, \\
||e_p||_{L^2} &= \frac{ \norm{\bar{p}_h-p_H}_{L^2(\Omega^{\epsilon})} } {\norm{\bar{p}_h}_{L^2(\Omega^{\epsilon})}}.
\end{align*}
where $\bar{p}_h$ is the cell average of the fine scale pressure, that is,
$\bar{p}_h = \frac{1}{|K_i|} \int_{K_i} p_h $ for all $K_i \in {\cal T}^{H}$.

%Remark that, for the construction of snapshot basis, we will use both standard way of construction and oversampling strategy. 

\subsection{Perforated domain with small inclusions}
In this section, we show the numerical results for the Stokes problem in perforated domain with small inclusions (left of Figure~\ref{fig:mesh}), see Table \ref{err-sh-1} for Example 1 and Table \ref{err-sh-2} for Example 2. Remark that the fine scale system has size $69146$, while our coarse scale systems only have size $1280-6880$ when we take 4 to 32 basis, which are much smaller. 
We will first take a look at the numerical behavior for the first example, where we take Dirichlet boundary conditions $u = (1,0)$ on the global boundary, and $u = (0,0)$ on the boundary of inclusions. The force term $f = (0,0)$. In Table \ref{err-sh-1}, we observe that the errors reduce substantially when we add more than 4 basis in each coarse block. For example, when we construct basis without oversampling,  the $L^2$ velocity error reduce from $33.2\%$ to $6.5\%$ when the number of basis increase from 4 to 8. Moreover, the energy error for velocity is $28\%$ and the $L^2$ error for pressure is $12\%$ as we take 32 offline basis for non-oversampling case. To get a faster convergence, we employ oversampling strategy when calculating the basis, that is, we solve the local problems in an oversampled coarse domain and then restrict the local velocity solution to the original coarse block to form our snapshot basis. In our numerical example, the oversampled domain is the original coarse block plus four fine cells layers neighboring the original domain. We can see that, the oversampling case gives us better accuracy with respect to velocity energy error and pressure error. For instance, the velocity energy error reduces from $25\%$ to $18\%$ and the pressure error decreased from $12\%$ to $2\%$ when the number of offline basis is 32 comparing the non-oversampling with oversampling case.
% On the other hand, the $L^2$ velocity errors are similar for non-oversampling and oversampling.

For the second example in perforated domain with small inclusions, we take Neumann boundary condition $\frac{\partial u}{\partial n}-pn = (0, 0)$ on the global boundary and Dirichlet condition $u = (0, 0)$ on the boundary of inclusions. The convergence history is shown in Table \ref{err-sh-2}. From this table, we find that the velocity $L^2$ error reduce from $39.9\%$ to $7.6\%$, and the pressure error reduce from  $69.8\%$ to $5.0\%$ when the basis number increase from 4 to 8 for the non-oversampling case. Moreover, the velocity $L^2$ error reduces to $4.9\%$ when we take 32 basis. We also observe that the oversampling strategy works efficiently to speed up the convergence rate for both the $L^2$ error and the energy error for velocity. For example, the velocity $L^2$ error is $7.6\%$ when we take 8 basis in non-oversampling case, however, it is only $2.6\%$ when we take the same number of basis in oversampling case. The velocity $H^1$ error reduce from $30.5\%$(for non-oversampling case) to $17.4\%$(for oversampling case) when we take 32 basis.  
In addition, we check the local mass conservation and present the numerically computed constants $\int_{\partial K_i} u \cdot n \, ds$ in Table \ref{sh-check}. From the table, we see that the maximum of the values $\int_{\partial K_i} u \cdot n \, ds$ is almost zero for all cases. This shows that we have exact mass conservation in the coarse grid level.
We remark that we also have fine grid mass conservation by the construction of the basis functions. 

Figure \ref{u-sh-1} and Figure \ref{u-sh-2} shows the corresponding solution plots for Example 1 and Example 2 in perforated domain with small inclusions, where we compare the fine scale velocity solution with different coarse scale velocity solution. In Figure \ref{u-sh-1}, we take 8 and 16 basis functions per coarse element for coarse scale computations. We observe that some fine scale features are lost in the solution when we take 8 basis, and the frame of the coarse edges can be seen in the figure. However, when we take 16 basis, we can observe a much smoother solution which capture the fine features well. Similar behavior can be found in Figure \ref{u-sh-2}, where we observe higher contrast between 4 basis per element and 16 basis per element coarse scale solutions.

\begin{table}[!h]
\centering
  \begin{tabular}{ |c | c | c | c | c | c | }
    \hline
$M_{\text{off}}$ & $DOF$  & $||e_u||_{L^2} $	& $||e_u||_{DG} $ & $||e_u||_{H^1} $ &$||e_p||_{L^2}$   \\  \hline 
\multicolumn{6}{|c|}{Non-oversampling}  \\ \hline
4  & 1280		&33.2  &96.8  &76.8  &--  	 \\  \hline
8  & 2080		&6.5 	 &48.8  	&43.7  &38.1  	 \\  \hline
16  & 3680	    &2.6 	 &31.9  	&28.9  &12 	 \\  \hline
32  & 6880	    &1.9 	 &28.3  	&25.3  &12\\  \hline
\multicolumn{6}{|c|}{Oversampling, $K^+ = K + 4$}  \\ \hline
4  & 1280		&32.6 	 &85.7  	&69.9  &--  	 \\  \hline
8  & 2080		&6.6 	 &39.6  	&36.7  &23.4  	 \\  \hline
16  & 3680	    &1.9 	 &21.7  	&19.4  &2.7 	 \\  \hline
32  & 6880	    &1.8 	 &20.3  	&18.5  &2.7\\  \hline
  \end{tabular}
  \caption{Stokes problem in perforated domain with small inclusions. Numerical results for \textit{Example 1}. Non-oversampling and oversampling with 4 fine layers. }
  \label{err-sh-1}
\end{table}

\begin{table}[!h]
\centering
  \begin{tabular}{ |c | c | c | c | c | c | }
    \hline
$M_{\text{off}}$ & $DOF$  & $||e_u||_{L^2} $	& $||e_u||_{DG} $ & $||e_u||_{H^1} $ &$||e_p||_{L^2}$   \\  \hline
\multicolumn{6}{|c|}{Non-oversampling}  \\ \hline 
4  & 1280		&39.9 	&87.6  	&71.2  &69.8  	 \\  \hline
8  & 2080		&7.6 	 &49.4  	&39.5  &5.0  	 \\  \hline
16  & 3680	    &6.7 	 &36.7  	&31.8  &2.6 	 \\  \hline
32  & 6880	    &4.9 	 &35.9  	&30.5  &2.9\\  \hline
\multicolumn{6}{|c|}{Oversampling, $K^+ = K + 4$}  \\ \hline
4  & 1280		&31.7 	 &69.6  	&52.6  &--  	 \\  \hline
8  & 2080		&2.6 	 &36.7  	&27.8  &16.8  	 \\  \hline
16  & 3680	    &1.8 	 &25.5  	&20.7  &3.6 	 \\  \hline
32  & 6880	    &1.5 	 &20.3  	&17.4  &3.5\\  \hline
  \end{tabular}
  \caption{Stokes problem in perforated domain with small inclusions. Numerical results for \textit{Example 2}. Non-oversampling and oversampling with 4 fine layers. }
  \label{err-sh-2}
\end{table} 

\begin{table}[!h]
\centering
  \begin{tabular}{ |c | c | c | c | }
    \hline
 \multicolumn{4}{|c|}{Example 1}  \\ \hline  
$M_{\text{off}}$ & DOF  & Non-oversampling	& Oversampling  \\  \hline 
4  & 1280		&2.9e-20  &-4.4e-22 	 \\  \hline
8  & 2080		&6.6e-18 &-4.2e-18 	 \\  \hline
16  & 3680	    &5.7e-19 &-9.5e-18 	 \\  \hline
32  & 6880	    &-4.0e-18 &1.2e-15\\  \hline
32  & 6880	    &-4.0e-18 &1.2e-15\\  \hline
\multicolumn{4}{|c|}{Example 2}  \\ \hline
$M_{\text{off}}$ & DOF  & Non-oversampling	& Oversampling  \\  \hline 
4  & 1280		&-8.4e-22  &4.1e-22 	 \\  \hline
8  & 2080		&-1.3e-19  &-1.9e-20 	 \\  \hline
16  & 3680	    &1.9e-19 &-5.8e-22 	 \\  \hline
32  & 6880	    &9.7e-20 &-5.1e-18 \\  \hline
  \end{tabular}
  \caption{Stokes problem in perforated domain with small inclusions. Verification of local mass conservation on coarse edges by computing the maximum of $\int_{\partial K_i} u \cdot n \, ds$ over all coarse blocks. Top: Example 1. Bottom: Example 2}
  \label{sh-check}
\end{table}

\subsection{Perforated domain with some extremely small inclusions}
In this section, we show the numerical results for the Stokes problem in perforated domain with various size of inclusions (right of Figure~\ref{fig:mesh}), see Table \ref{err-ex-1} for Example 1 and Table \ref{err-ex-2} for Example 2. The fine degrees of freedoms for this domain is $91588$, and the coarse degrees of freedoms range only from 680 for 4 basis per element to 3480 for 32 basis per coarse element. Note that, in this domain we use the coarse mesh where each block is a rectangle, thus the coarse degrees of freedom is less than that in the previous section where we used triangular blocks for coarse mesh. From the tables, we can see that for Example 1, the velocity $L^2$ errors can be less than $10\%$ when we take more than 8 basis. Moreover, for Example 2, the velocity $L^2$ errors are already $6.1\%$ (or $3.5\%$) for non-oversampling case (or oversampling case) when we take exactly 8 basis. The convergence results in Table \ref{err-ex-1} indicate that oversampling helps to reduce the energy errors for velocity. For example, we take 32 basis, the velocity $H^1$ error become $12.9\%$ in the oversampling case, which is much smaller than $20.1\%$ in the non-oversampling case. The oversampling strategy works even better to improve the velocity results for Example 2. Table \ref{err-ex-2} shows that the velocity $L^2$, $H^1$ and DG errors are almost reduced by half when we take 8, 16 or 32 basis applying the oversampling strategy. The local mass conservation is also verified by the data presented in Table \ref{ex-check}. Figure \ref{u-ex-1} and Figure \ref{u-ex-2} demonstrate the velocity solution plots for Example 1 and Example 2 respectively. In Figure \ref{u-ex-1}, we compare the fine scale velocity solution with 8 basis coarse scale solution and 16 basis coarse scale solution. It is clear to see that when we take 8 basis, the higher value regions in the solution shrinks, and some properties of the solution between two inclusions are not captured well. These drawbacks are recovered better when we take 16 basis, and the solution is more comparable with fine scale solution. The solution is reported in Figure \ref{u-ex-2} for Example 2, where we compare 4 basis and 16 basis coarse scale solution with fine scale solution. The behavior is similar as before.

In addition, in Figure~\ref{oversamp}, 
we present the comparison the solutions for Example 2 in perforated domain with small inclusions (left of Figure~\ref{fig:mesh}) in oversampling and non-oversampling case respectively. The x-component of velocity is shown on the top, and the y-component is on the bottom, the results for non-oversampling are on the left ($L^2$ error $6.7\%$, $H^1$ error $31.8\%$), and results using oversampling is on the right ($L^2$ error $1.8\%$, $H^1$ error $20.7\%$). Here, we take 16 basis as an example. It can be observed that when we use the oversampling strategy, the transitions from lower values to the higher values in the solution are  smoother compared with the one without oversampling. This helps us to understand the advantage of oversampling visually.

\begin{table}[!h]
\centering
  \begin{tabular}{ |c | c | c | c | c | c | }
    \hline
$M_{\text{off}}$ & $DOF$  & $||e_u||_{L^2} $	& $||e_u||_{DG} $ & $||e_u||_{H^1} $ &$||e_p||_{L^2}$   \\  \hline
\multicolumn{6}{|c|}{Non-oversampling}  \\ \hline 
4  & 680		&46.6 	&93.4  	&79.7  &--  	 \\  \hline
8  & 1080		&11.5 	 &55.0  	&52.1  &39.6  	 \\  \hline
16  & 1880	    &2.9 	 &27.9  	&25.9  &9.1 	 \\  \hline
32  & 3480	    &1.9 	 &22.3  	&20.1  &5.6\\  \hline
\multicolumn{6}{|c|}{Oversampling, $K^+ = K + 4$}  \\ \hline
4  & 680		&50.8 	 &83.3  	&76.3  &--  	 \\  \hline
8  & 1080		&10.8 	 &48.1  	&45.3  &31.6  	 \\  \hline
16  & 1880	    &4.5 	 &23.4  	&21.6  &2.5 	 \\  \hline
32  & 3480	    &1.6 	 &14.5  	&12.9  &2.1\\  \hline
  \end{tabular}
  \caption{Stokes problem in perforated domain with additional small inclusions. Numerical results for \textit{Example 1}. Non-oversampling and oversampling with 4 fine layers. }
  \label{err-ex-1}
\end{table} 

\begin{table}[!h]
\centering
  \begin{tabular}{ |c | c | c | c | c | c | }
    \hline
$M_{\text{off}}$ & $DOF$  & $||e_u||_{L^2} $& $||e_u||_{DG} $ 	& $||e_u||_{H^1} $ &$||e_p||_{L^2}$   \\  \hline 
\multicolumn{6}{|c|}{Non-oversampling}  \\ \hline
4  & 680		&63.1 	 &96.6  	&82.1  &33.6  	 \\  \hline
8  & 1080		&6.1	 &47.7	&36.5  &3.7  	 \\  \hline
16  &1880 	    &3.8 	 &28.4  	&24.2  &1.5 	 \\  \hline
32  & 3480	    &2.9 	 &26.6  	&22.3  &1.4\\  \hline
\multicolumn{6}{|c|}{Oversampling, $K^+ = K + 4$}  \\ \hline
4  & 680		&41.6  &65.6  &54.3  &--  	 \\  \hline
8  & 1080		&3.5 	 &29.3  	&23.1  &11.8  	 \\  \hline
16  &1880	    &1.7 	 &15.5  	&13.0  &4.3 	 \\  \hline
32  &3480 	    &1.3  &12.9  	&11.0  &2.8\\  \hline
  \end{tabular}
  \caption{Stokes problem in perforated domain with additional small inclusions. Numerical results for \textit{Example 2}. Non-oversampling and oversampling with 4 fine layers. }
  \label{err-ex-2}
\end{table}

\begin{table}[!h]
\centering
  \begin{tabular}{ |c | c | c | c | }
    \hline
    \multicolumn{4}{|c|}{Example 1}  \\ \hline
$M_{\text{off}}$ & DOF  & Non-oversampling	& Oversampling  \\  \hline 
4  & 680		&2.3e-20 &2.6e-20 	 \\  \hline
8  & 1080		&1.8e-20 &-5.5e-20 	 \\  \hline
16  & 1880	    &-8.2e-18 & 5.5e-18	 \\  \hline
32  & 2480	    &3.9e-20 &3.5e-17\\  \hline
\multicolumn{4}{|c|}{Example 2}  \\ \hline
$M_{\text{off}}$ & DOF  & Non-oversampling	& Oversampling  \\  \hline 
4  & 680		&1.8e-23 &1.0e-22 	 \\  \hline
8  & 1080		&-5.1e-22 &-1.8e-22 	 \\  \hline
16  & 1880	    &4.7e-19 &1.2e-19 	 \\  \hline
32  & 2480	    &1.4e-20 &-5.2e-21 \\  \hline
  \end{tabular}
  \caption{Stokes problem in perforated domain with small inclusions.  Verification of local mass conservation on coarse edges by computing the maximum value of $\int_{\partial K_i} u\cdot n \, ds$ over all coarse blocks. Top: Example 1. Bottom: Example 2}
  \label{ex-check}
\end{table} 

\begin{figure}[!h]
	\begin{center}
	    \includegraphics[width=0.95\textwidth]{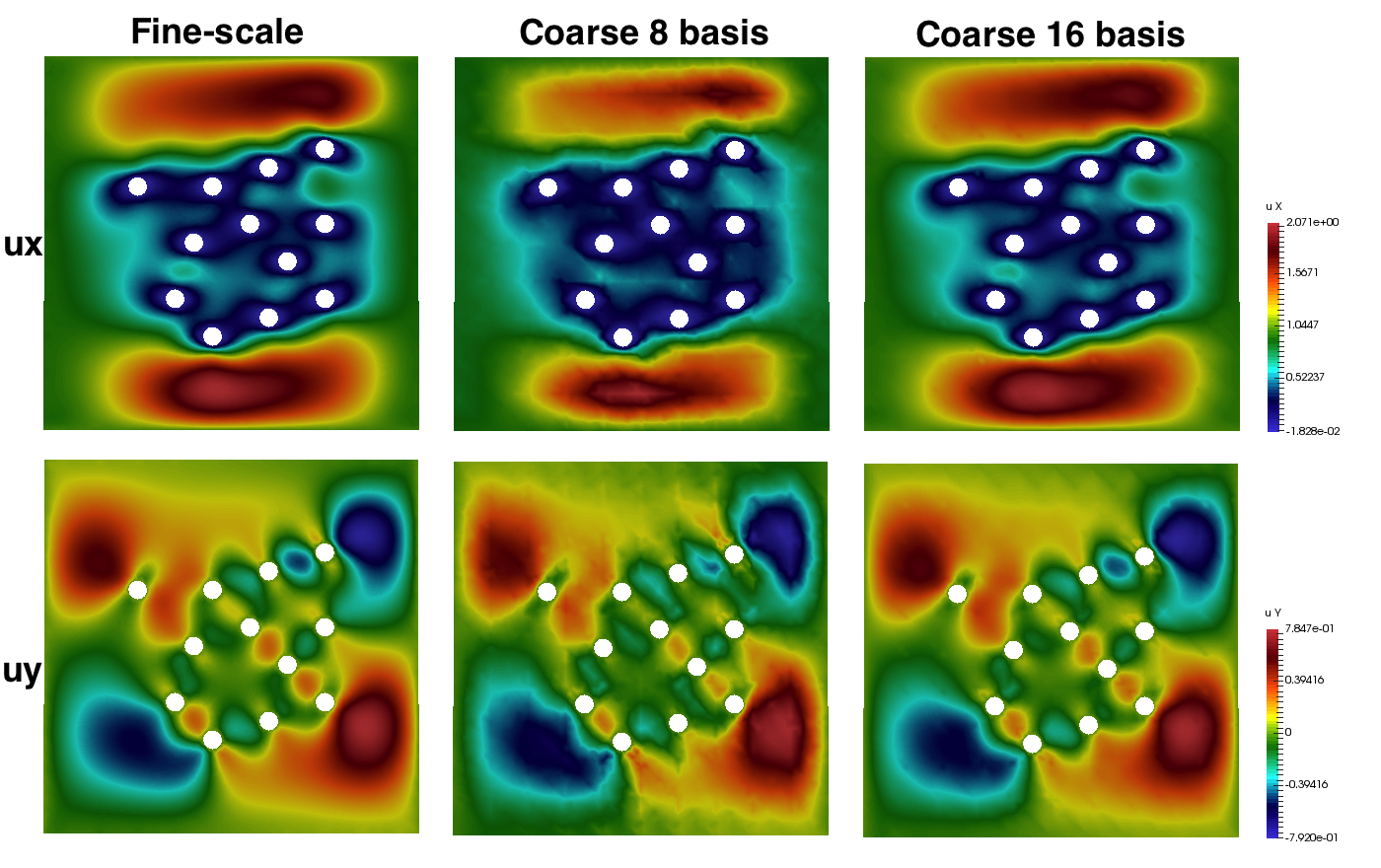}
    \end{center}    
    \caption{Stokes problem for perforated domain with small inclusions. Numerical solution for Example 1. Top: x-component of velocity. Bottom: y-component of velocity.
    Left: Fine-scale solution. 
    Middle: Coarse-scale solution with 8 basis, non-oversampling. 
    Right: Coarse-scale solution with 16 basis, non-oversampling. }
    \label{u-sh-1}
\end{figure}

\begin{figure}[!h]
	\begin{center}
	    \includegraphics[width=0.95\textwidth]{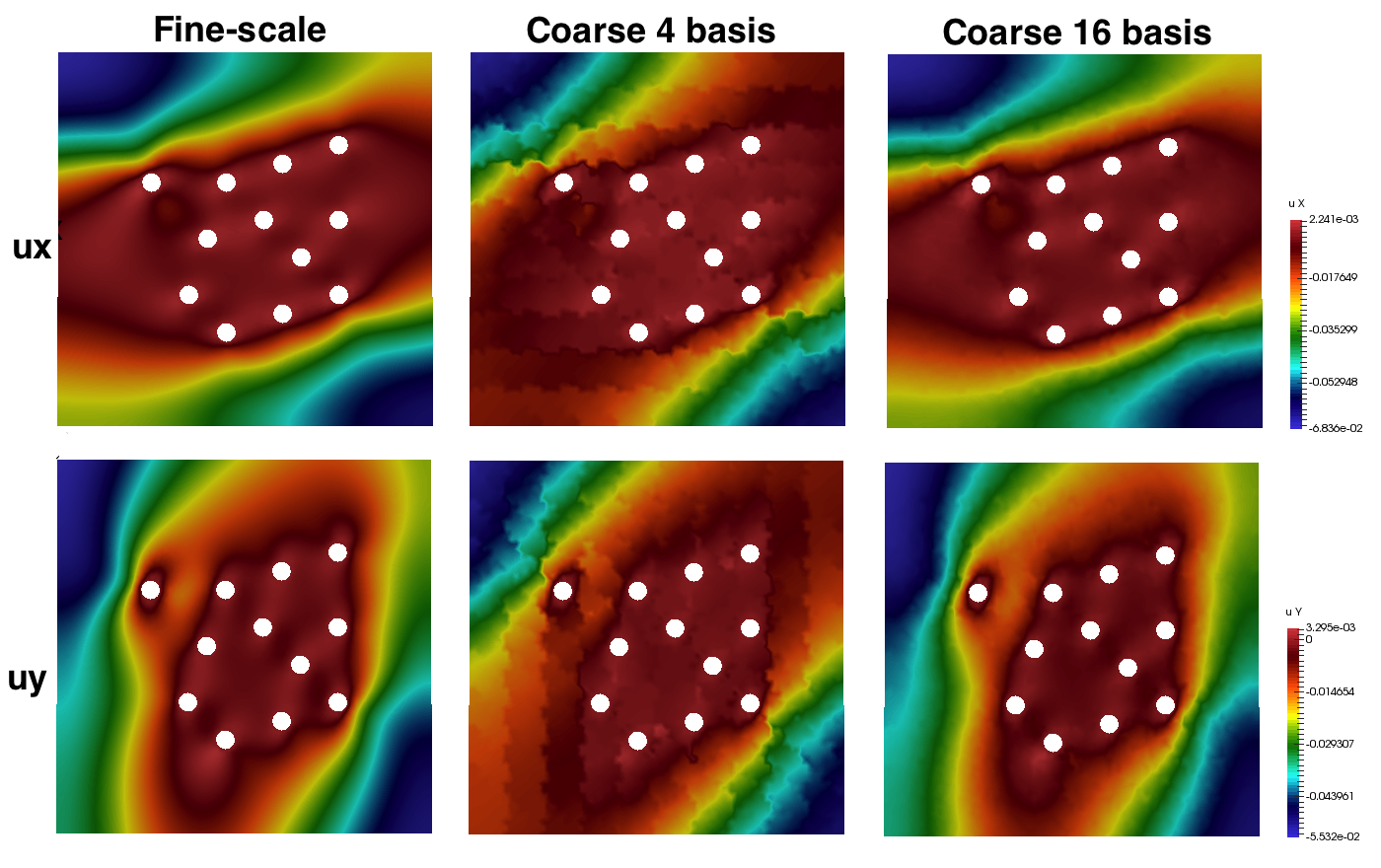}
    \end{center}    
    \caption{Stokes problem for perforated domain with small inclusions. Numerical solution for Example 2. Top: x-component of velocity. Bottom: y-component of velocity.
    Left: Fine-scale solution. 
    Middle: Coarse-scale solution with 4 basis, non-oversampling. 
    Right: Coarse-scale solution with 16 basis, non-oversampling.}
    \label{u-sh-2}
\end{figure}

\begin{figure}[!h]
	\begin{center}
	    \includegraphics[width=0.95\textwidth]{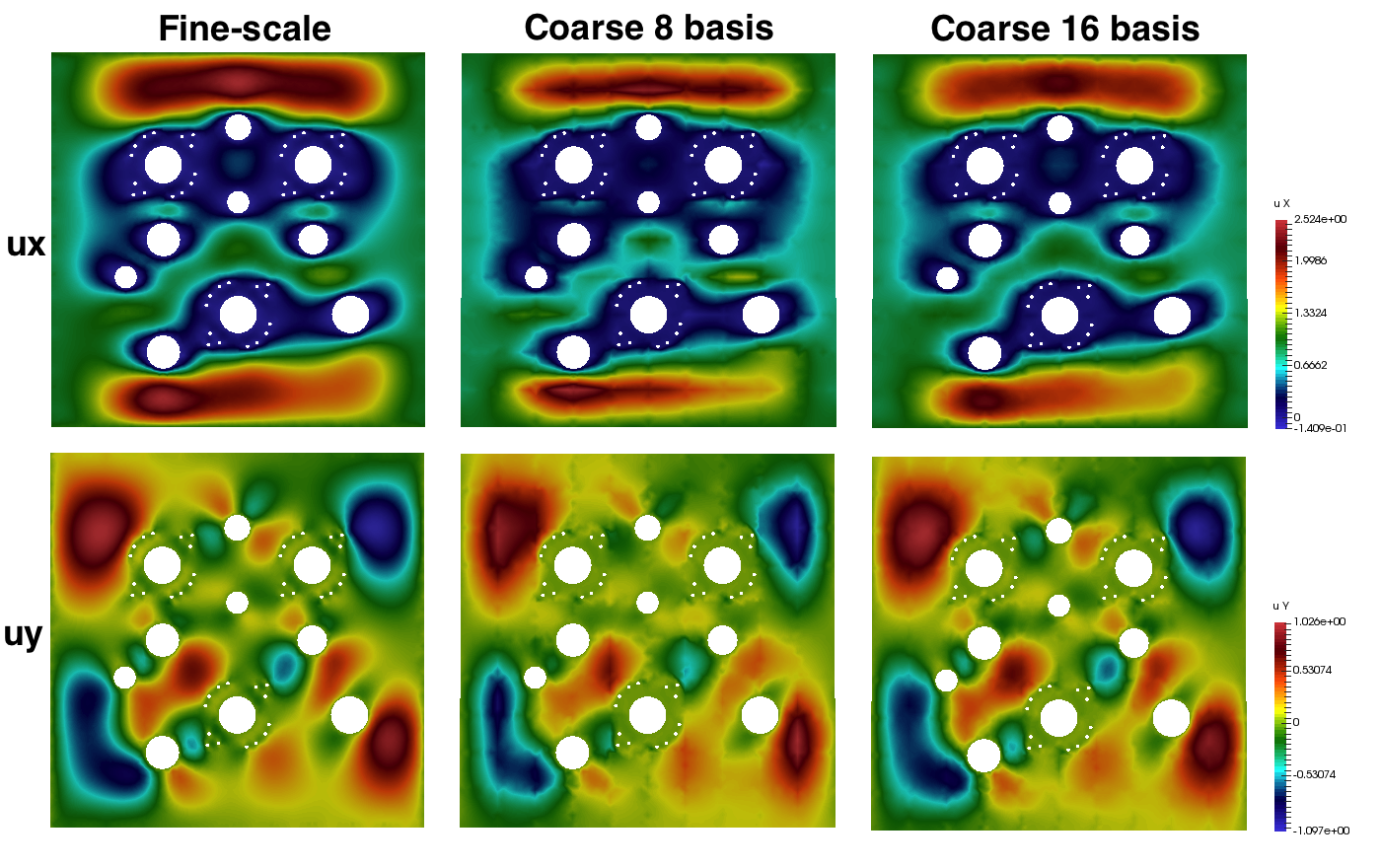}
    \end{center}    
    \caption{Stokes problem for perforated domain with large inclusions. Numerical solution for Example 1. Top: x-component of velocity. Bottom: y-component of velocity.
    Left: Fine-scale solution. 
    Middle: Coarse-scale solution with 8 basis, non-oversampling. 
    Right: Coarse-scale solution with 16 basis, non-oversampling. }
    \label{u-ex-1}
\end{figure}

\begin{figure}[!h]
	\begin{center}
	    \includegraphics[width=0.95\textwidth]{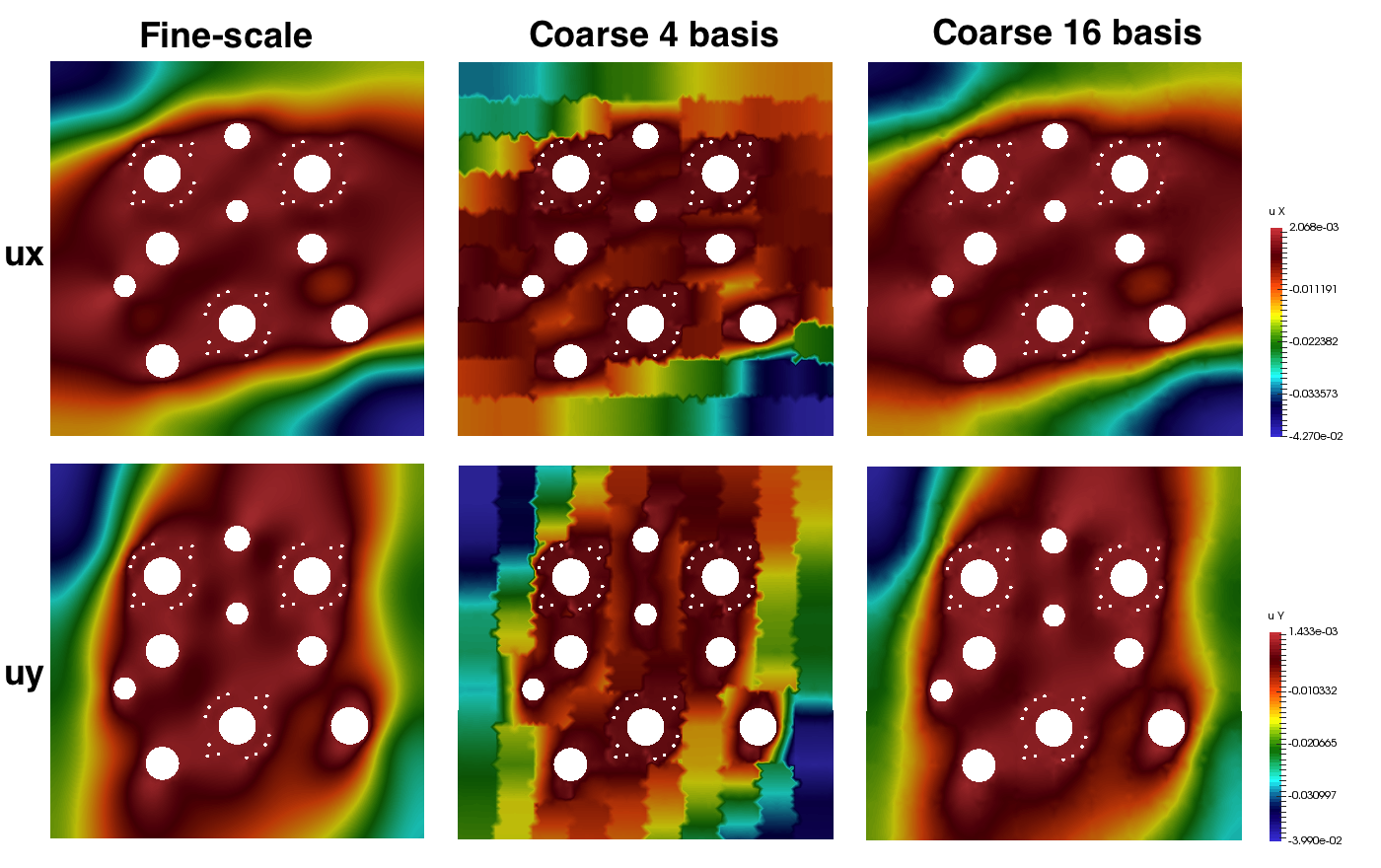}
    \end{center}    
    \caption{Stokes problem for perforated domain with large inclusions. Numerical solution for Example 2. Top: x-component of velocity. Bottom: y-component of velocity.
    Left: Fine-scale solution. 
    Middle: Coarse-scale solution with 4 basis, non-oversampling. 
    Right: Coarse-scale solution with 16 basis, non-oversampling.}
    \label{u-ex-2}
\end{figure}

\begin{figure}[!h]
	\begin{center}
	    \includegraphics[width=0.75\textwidth]{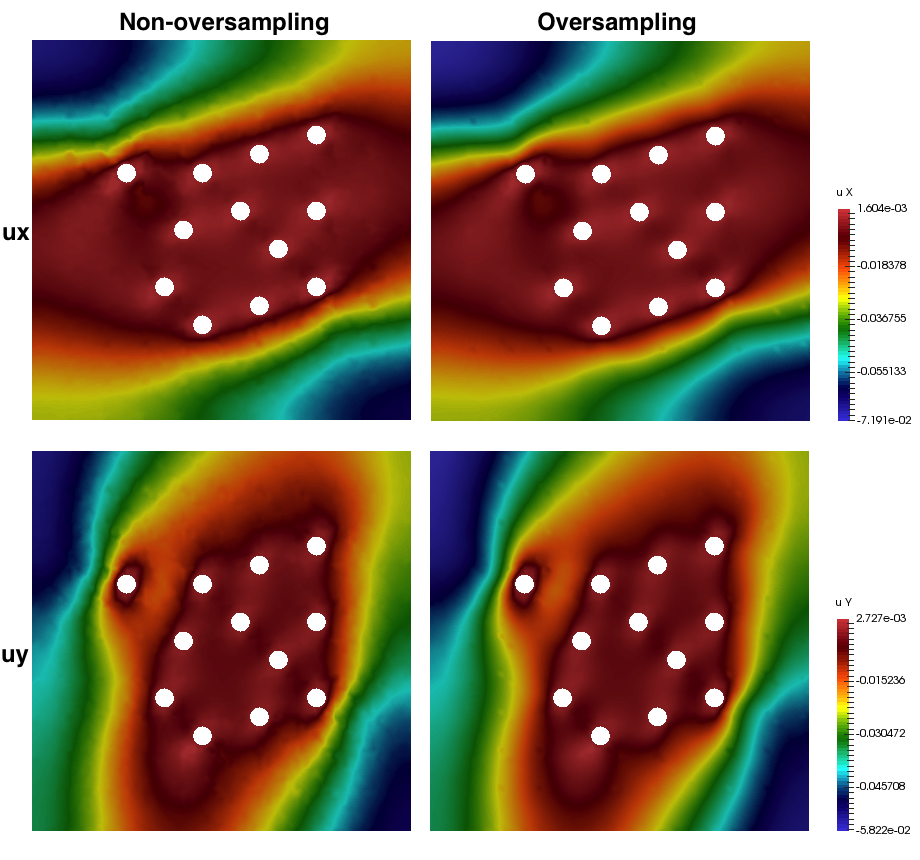}
    \end{center}    
    \caption{Stokes problem for perforated domain with small inclusions. Numerical solution for Example 2. Top: x-component of velocity. Bottom: y-component of velocity.
    Left: Coarse-scale solution with 16 basis, non-oversampling.
    Right: Coarse-scale solution with 16 basis, oversampling with 4 fine layers. }
    \label{oversamp}
\end{figure}

\newpage

\section{Convergence results}\label{sec:analysis}
In this section, we will present the analysis of our multiscale method \eqref{eq:coarse-scale}. First, we will prove the existence and uniqueness of the problem \eqref{eq:coarse-scale} by showing the coercivity and continuity of $a_{\text{DG}}$, the continuity of $b_{\text{DG}}$ and the discrete inf-sup condition for $b_{\text{DG}}$. Next, we will derive a convergence result for our method.
For our analysis, we define the energy norm 
\begin{equation}
\label{eq:A-norm}
\|u\|_A^2 = \int_{\Omega} |\nabla u|^2 + \frac{1}{h} \sum_{E\in\mathcal{E}^H} \int_E |[u]|^2.
\end{equation}
Moreover, we define the following $L^2$ norm
\begin{equation}\label{eq:q-norm}
\|(q, \widehat{q})\|_Q^2= \|q\|_{L^2(\Omega^{\epsilon})}^2 + \sum_{E \in\mathcal{E}^H} h \|\widehat{q}\|_{L^2(E)}^2.
\end{equation}
The notation $\alpha \lesssim \beta$ means that $\alpha \leq C \beta$
for a constant $C$ independent of the mesh size. 
We notice that the $Q$-norm in (\ref{eq:q-norm})
is a weaker norm compared with the more usual choice $\|q\|_{L^2(\Omega^{\epsilon})}^2 + \sum_{E \in\mathcal{E}^H} H \|\widehat{q}\|_{L^2(E)}^2$.

First, we consider the continuity and coercivity of the bilinear form $a_{\text{DG}}$, as well as the continuity of the bilinear form $b_{\text{DG}}$. 
These properties are summarized in the following lemma. 

\begin{lemma}
Assume that $\gamma = O(1)$ is large enough. 
The bilinear form $a_{\text{DG}}$ is continuous and coercive, that is
\begin{eqnarray}
|a_{\text{DG}}(u,v)| &\leq& a_1 \|u\|_A \|v\|_A \\
a_{\text{DG}}(u,u) &\geq& a_0 \|u\|_A^2 \label{eq:coercivity}
\end{eqnarray}
and the bilinear form $b_{\text{DG}}$ is also continuous:
\begin{equation}
|b_{\text{DG}}(v,q,\widehat{q})| \leq b_1 \| v\|_A \|(q,\widehat{q})\|_Q.
\end{equation}
%where $\|(q,\widehat{q})\|$ is the $L^2$ norm defined in (\ref{eq:q-norm}).
\end{lemma}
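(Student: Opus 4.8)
The plan is to verify the three estimates one at a time; the only non-routine ingredient is a discrete trace inequality on the fine mesh, which I set up first. For $v\in V_h^{\text{DG}}$ and a coarse edge $E\subset\partial K$, I claim $h\int_E |(\nabla v)\,n|^2 \lesssim \int_K |\nabla v|^2$, with a constant $C_{\mathrm{tr}}$ depending only on the shape-regularity of $\mathcal{T}^h$, and in particular independent of $H$ and of the perforation geometry. Indeed, $\nabla v$ is piecewise constant on $\mathcal{T}^h$, so on each fine edge $e\subset E$ contained in a fine triangle $T\subset K$ one has $\norm{\nabla v}_{L^2(e)}^2 = |e|\,|\nabla v|_T^2 \lesssim h^{-1}|T|\,|\nabla v|_T^2 = h^{-1}\norm{\nabla v}_{L^2(T)}^2$; summing over the fine edges composing $E$ and the corresponding distinct adjacent fine triangles in $K$ gives the claim. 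Summing over all coarse edges and using that each $K$ has only a bounded number of edges yields $\sum_{E\in\mathcal{E}^H} h\int_E|\{(\nabla v)\,n\}|^2 \lesssim \norm{\nabla v}_{L^2(\Omega^{\epsilon})}^2 \le \norm{v}_A^2$.

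\emph{Continuity and coercivity of $a_{\text{DG}}$.} Split $a_{\text{DG}}$ into the bulk term, the two consistency terms, and the penalty term. The bulk term is $\le\norm{\nabla u}_{L^2}\norm{\nabla v}_{L^2}\le\norm{u}_A\norm{v}_A$ and the penalty term is $\le\gamma\big(\tfrac1h\sum_E\int_E|[u]|^2\big)^{1/2}\big(\tfrac1h\sum_E\int_E|[v]|^2\big)^{1/2}\le\gamma\norm{u}_A\norm{v}_A$, both by Cauchy–Schwarz. For a consistency term I pair $\{(\nabla u)\,n\}$ against $[v]$ edgewise with weights $h$ and $h^{-1}$, apply Cauchy–Schwarz, and invoke the trace inequality to obtain $\big|\sum_E\int_E\{(\nabla u)\,n\}\cdot[v]\big|\lesssim\norm{\nabla u}_{L^2(\Omega^{\epsilon})}\big(\tfrac1h\sum_E\int_E|[v]|^2\big)^{1/2}\le\norm{u}_A\norm{v}_A$, and similarly for the symmetric term; adding up gives $|a_{\text{DG}}(u,v)|\le a_1\norm{u}_A\norm{v}_A$. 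For coercivity, write $a_{\text{DG}}(u,u)=\norm{\nabla u}_{L^2(\Omega^{\epsilon})}^2 - 2\sum_E\int_E\{(\nabla u)\,n\}\cdot[u] + \tfrac{\gamma}{h}\sum_E\int_E|[u]|^2$, bound the middle term by the trace inequality and Young's inequality as $2\big|\sum_E\int_E\{(\nabla u)\,n\}\cdot[u]\big|\le \delta\,C_{\mathrm{tr}}\norm{\nabla u}_{L^2(\Omega^{\epsilon})}^2 + \delta^{-1}\tfrac1h\sum_E\int_E|[u]|^2$, and choose $\delta$ with $\delta C_{\mathrm{tr}}=\tfrac12$. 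This leaves $a_{\text{DG}}(u,u)\ge \tfrac12\norm{\nabla u}_{L^2(\Omega^{\epsilon})}^2 + (\gamma-2C_{\mathrm{tr}})\tfrac1h\sum_E\int_E|[u]|^2$, so any $\gamma\ge 2C_{\mathrm{tr}}+\tfrac12$ — this is the meaning of ``$\gamma=O(1)$ large enough'' — gives \eqref{eq:coercivity} with $a_0=\tfrac12$.

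\emph{Continuity of $b_{\text{DG}}$.} The volume part is bounded by $\big|\sum_K\int_K q\,\div v\big|\le\norm{q}_{L^2(\Omega^{\epsilon})}\norm{\div v}_{L^2(\Omega^{\epsilon})}\le C\norm{q}_{L^2(\Omega^{\epsilon})}\norm{v}_A$, using $|\div v|\le\sqrt{n}\,|\nabla v|$ pointwise. The edge part is bounded, using Cauchy–Schwarz on each edge and then in the edgewise weights $h$, $h^{-1}$, by $\big|\sum_E\int_E\widehat q\,([v]\cdot n)\big|\le\big(\sum_E h\norm{\widehat q}_{L^2(E)}^2\big)^{1/2}\big(\sum_E\tfrac1h\norm{[v]}_{L^2(E)}^2\big)^{1/2}\le\norm{(q,\widehat q)}_Q\norm{v}_A$. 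Adding the two contributions gives $|b_{\text{DG}}(v,q,\widehat q)|\le b_1\norm{v}_A\norm{(q,\widehat q)}_Q$; note that the $h$-weight in the $Q$-norm \eqref{eq:q-norm} is exactly what is dictated by the $h^{-1}$-weight on the jump term in $\norm{\cdot}_A$, which is why this weaker $Q$-norm is the natural choice.

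\emph{Main obstacle.} The one delicate point is the discrete trace inequality, and above all the assertion that $C_{\mathrm{tr}}$ does not degenerate as $H/h\to\infty$ or as the perforations shrink. This is ensured because the per-element scaling $|e|/|T|\sim h^{-1}$ depends only on shape-regularity of $\mathcal{T}^h$, while the (possibly large) number of fine edges composing a coarse edge enters only through a telescoping sum over disjoint fine triangles inside $K$; everything else in the lemma is Cauchy–Schwarz and Young's inequality.
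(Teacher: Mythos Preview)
Your proof is correct and is precisely the classical argument that the paper invokes by reference but omits: the paper simply states that continuity and coercivity of $a_{\text{DG}}$ are classical (citing standard IPDG references) and that continuity of $b_{\text{DG}}$ follows from Cauchy--Schwarz. Your discrete trace inequality, the Young-inequality coercivity argument, and the weighted Cauchy--Schwarz for $b_{\text{DG}}$ are exactly the ingredients those references contain, so there is no divergence in approach.
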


\begin{proof}
The proof for continuity and coercivity of $a_{\text{DG}}$ is classical \cite{Laz_PShoebV_2003, AdaptiveGMsDGM, chung2015online}, and will be omitted here.
%First, we define a stronger norm for pressure
%\[
%\||q\|| = \|q\|_{L^2(\Omega^{\epsilon})} + h\sum_{e \in\mathcal{E}^h} \int_e q^2 ds,
%\]
%In the case $q\in Q_H$, by an inverse inequality, we have
%\begin{equation}\label{norm-q}
%\|q\|_{L^2(\Omega^{\epsilon})}  \leq \||q\||  \leq C \|q\|_{L^2(\Omega^{\epsilon})} 
%\end{equation}
For the continuity of $b_{\text{DG}}$, it follows from the Cauchy-Schwarz inequality.
%\begin{equation*}
%|b_{\text{DG}}(v,q, \widehat{q})| 
%\leq |\sum_{K \in\mathcal{T}^H} \int_{K} q \, \div v| +  |\sum_{E\in\mathcal{E}^H} \int_E \widehat{q} \, ([v]\cdot n)|
%\end{equation*}
%By the Cauchy-Schwarz inequality, we have
%\begin{equation*}
%|b_{\text{DG}}(v,q, \widehat{q})| 
%\lesssim  \|q\|_{L^2(\Omega^{\epsilon})} \|\nabla v\|_{L^(\Omega^{\epsilon})}+  \sum_{E \in\mathcal{E}^H} (\int_E (\widehat{q})^2)^{\frac{1}{2}} (\int_E ([v]\cdot n)^2)^{\frac{1}{2}} 
%\end{equation*}
%Then by the Young's inequality, we obtain $|b_{\text{DG}}(v,q, \widehat{q})| \leq b_1  \|v\|_A \|(q, \widehat{q})\|$,
%where 
%This completes the proof.
\end{proof}

\subsection{Inf-sup condition}
In this section, we will prove an inf-sup condition for the bilinear form $b_{\text{DG}}(v,q,\widehat{q})$. 
We will assume the continuous inf-sup condition holds for $b(v,q)$.  
That is, for any $q\in L^2_0(\Omega^{\epsilon})$,
we have
\begin{equation}
\label{eq:cont-infsup}
\sup_{u\in H^1_0(\Omega^{\epsilon})} \frac{ b(u,q) }{\| u \|_{H^1(\Omega^{\epsilon})}} \geq \beta \|q\|_{L^2(\Omega^{\epsilon})}. 
\end{equation}
We will also assume the following independence condition for the multiscale basis. 
For every coarse block $K_i \in\mathcal{T}^H$, there are at least $4$ basis functions, denoted by $\phi^{i,\text{off}}_j$, $j=1,2,3,4$,  
in the local offline space $V^i_{\text{off}}$
such that there are coefficients $d_{jk}$ such that 
\begin{equation}
\label{eq:assume}
\int_{E_l} \Big( \sum_{j=1}^{4} d_{jk} \phi^{i,\text{off}}_j \Big) \cdot n = \delta_{kl}, \quad k,l=1,2,3,4,
\end{equation}
for all coarse edges $E_l$ on the boundary of $K_i$.
We remark that the above independence condition says that we can construct a function in $V^i_{\text{off}}$
with normal component having mean value one on one coarse edge
and mean value zero on the other coarse edges.
In particular, for each coarse element $K_i$, and for every edge $E_j \in \partial K_i$,
there is a basis function $\Psi_j$ such that $\int_{E_j} \Psi_j \cdot n = 1$
and $\int_{E_k} \Psi_j \cdot n = 0$ for other coarse edges $E_k \in \partial K_i$. 

The next lemma is the main result of this section.
\begin{lemma}
For all $q \in Q_H$ and $\widehat{q} \in \widehat{Q}_H$, we have
\begin{equation}
\label{eq:infsup}
\|(q, \widehat{q} )\|_Q \leq C_{\text{\rm infsup}} \sup_{v \in V_H} \frac{b_{\text{DG}}(v, q, \widehat{q})}{\|v\|_A}
\end{equation}
where $C_{\text{\rm infsup}} > 0$ is a constant independent of the mesh size, provided the fine mesh size $h$ is small enough.
\end{lemma}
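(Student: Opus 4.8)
The plan is to verify \eqref{eq:infsup} by the Fortin technique: given $(q,\widehat q)\in Q_H\times\widehat{Q}_H$ I will construct $v=v_1+v_2\in V_H$ with $b_{\text{DG}}(v,q,\widehat q)\gtrsim\|(q,\widehat q)\|_Q^2$ and $\|v\|_A\lesssim\|(q,\widehat q)\|_Q$; dividing the two estimates gives \eqref{eq:infsup}. Here $v_1$ is designed to detect the interface pressure $\widehat q$ and $v_2$ the bulk pressure $q$.

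The building blocks are edge functions from \eqref{eq:assume}. For each coarse block $K_i$ and each edge $E_l\subset\partial K_i$ I take $\Psi_l^{(i)}\in V^i_{\text{off}}$ to be the snapshot solution with boundary data $|E_l|^{-1}n$ on $E_l$ and $0$ on the other edges of $K_i$ (by \eqref{eq:assume}, or by enriching the offline space with these functions, such $\Psi_l^{(i)}$ are available). They satisfy $\int_{E_k}\Psi_l^{(i)}\cdot n=\delta_{kl}$, they have matching traces (up to the sign of the normal) from the two sides of any interior edge, and, from the normalization of the spectral problem \eqref{eq:eig} and a comparison with explicit channel-type profiles, $\|\nabla\Psi_l^{(i)}\|_{L^2(K_i)}\lesssim H^{-1}$ and $\|\Psi_l^{(i)}\|_{L^2(\partial K_i)}\lesssim H^{-1/2}$. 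Now put $v_1|_{K_i}=\sum_{E_l\subset\partial K_i}\tfrac12 h\,|E_l|\,\widehat q_{E_l}\,\Psi_l^{(i)}$. Since the normal fluxes from the two sides of an edge add in the jump, $\int_E\widehat q\,([v_1]\cdot n)=h\|\widehat q\|_{L^2(E)}^2$ on interior edges (and similarly, up to a constant, on boundary ones), so $\sum_E\int_E\widehat q\,([v_1]\cdot n)\gtrsim\sum_E h\|\widehat q\|_{L^2(E)}^2$. Using the norm bounds above and $|E_l|\sim H$ one computes $\|v_1\|_A^2\lesssim\sum_E h\|\widehat q\|_{L^2(E)}^2$, and the parasitic term $-\sum_K\int_K q\,\div v_1=-\sum_K q_K\int_{\partial K}v_1\cdot n$ is bounded by $C(h/H)^{1/2}\|(q,\widehat q)\|_Q^2$, hence of lower order once $h/H$ is small.

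For $v_2$ I invoke the continuous inf-sup \eqref{eq:cont-infsup}: there is $w\in H^1_0(\Omega^\epsilon)^n$ with $b(w,q)=-\int_{\Omega^\epsilon}q\,\div w\geq\tfrac{\beta}{2}\|q\|_{L^2(\Omega^\epsilon)}^2$ and $\|w\|_{H^1(\Omega^\epsilon)}\lesssim\|q\|_{L^2(\Omega^\epsilon)}$, and I define the Fortin-type operator $v_2=\Pi w\in V_H$ by $\Pi w|_{K_i}=\sum_{E_l\subset\partial K_i}\bigl(\int_{E_l}w\cdot n\bigr)\Psi_l^{(i)}$. From $\int_{E_k}\Psi_l^{(i)}\cdot n=\delta_{kl}$ we get $\int_{K_i}\div(\Pi w)=\int_{\partial K_i}w\cdot n=\int_{K_i}\div w$, so, $q$ being piecewise constant, $-\sum_K\int_K q\,\div(\Pi w)=-\int_{\Omega^\epsilon}q\,\div w=b(w,q)\geq\tfrac{\beta}{2}\|q\|_{L^2}^2$. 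Because the flux $\int_E w\cdot n$ is single-valued across coarse edges and the edge traces of $\Psi_l^{(K^+)}$ and $\Psi_l^{(K^-)}$ are opposite, the reconstruction of $\Pi w$ from the two sides of every coarse edge coincides, so $[\Pi w]=0$ and $\sum_E\int_E\widehat q\,([\Pi w]\cdot n)=0$; thus $v_2$ leaves the $\widehat q$ balance untouched and $\|v_2\|_A=\|\nabla(\Pi w)\|_{L^2(\Omega^\epsilon)}$. Taking $v=v_1+v_2$, $b_{\text{DG}}(v,q,\widehat q)\geq\tfrac{\beta}{2}\|q\|_{L^2}^2+c\sum_E h\|\widehat q\|_{L^2(E)}^2-C(h/H)^{1/2}\|(q,\widehat q)\|_Q^2\gtrsim\|(q,\widehat q)\|_Q^2$ once $h$ is small enough to absorb the last term --- which is the hypothesis of the lemma --- and $\|v\|_A\leq\|v_1\|_A+\|v_2\|_A\lesssim\|(q,\widehat q)\|_Q$.

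The main obstacle is the $A$-stability $\|v_2\|_A=\|\nabla(\Pi w)\|_{L^2(\Omega^\epsilon)}\lesssim\|q\|_{L^2}$, and this is where the perforated geometry is really used. The crude estimate is only $\|\nabla(\Pi w)\|_{L^2(K_i)}^2\lesssim H^{-2}\sum_{E_l\subset\partial K_i}\bigl|\int_{E_l}w\cdot n\bigr|^2\lesssim H^{-2}\bigl(\|w\|_{L^2(K_i)}^2+H^2\|\nabla w\|_{L^2(K_i)}^2\bigr)$, which leaves a spurious factor $H^{-1}$ after summing over $K_i$ and using a global Poincar\'e inequality. I would remove it with a Poincar\'e inequality on each coarse block of the perforated domain: since $w\in H^1_0(\Omega^\epsilon)$ vanishes on the perforations contained in $K_i$, one has $\|w\|_{L^2(K_i)}\lesssim H\|\nabla w\|_{L^2(K_i)}$, hence $|\int_{E_l}w\cdot n|\lesssim H\|\nabla w\|_{L^2(K_i)}$ by a scaled trace inequality, whence $\|\nabla(\Pi w)\|_{L^2(K_i)}\lesssim\|\nabla w\|_{L^2(K_i)}$ and, after summation, $\|\nabla(\Pi w)\|_{L^2(\Omega^\epsilon)}\lesssim\|\nabla w\|_{L^2(\Omega^\epsilon)}\lesssim\|q\|_{L^2}$. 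Two further points need care: the bounds $\|\nabla\Psi_l^{(i)}\|_{L^2(K_i)}\lesssim H^{-1}$, $\|\Psi_l^{(i)}\|_{L^2(\partial K_i)}\lesssim H^{-1/2}$ presuppose that the offline space retains the low-energy modes realizing the unit-flux functions (the quantitative content behind \eqref{eq:assume}), and the boundary edges are dealt with by the same construction using $w=0$ on $\partial\Omega^\epsilon$. Finally, the fact that \eqref{eq:infsup} is stated with the weaker $Q$-norm (weight $h$ rather than $H$) is exactly what makes the jump penalty $\tfrac1h\int_E|[v_1]|^2$ balance against $\int_E\widehat q\,([v_1]\cdot n)$ in the estimate for $v_1$.
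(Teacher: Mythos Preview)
Your plan is the same Fortin strategy the paper uses: build $v$ from two pieces, one controlling $q$ via the continuous inf-sup and a flux-matching interpolant built out of the edge functions from~\eqref{eq:assume}, the other controlling $\widehat q$ by prescribing edge fluxes proportional to $\widehat q$; then absorb the cross term. The paper merely swaps your labels (its $v_1$ handles $q$, its $v_2$ handles $\widehat q$) and treats the cross term $\sum_K\int_K q\,\div v_2$ with Young's inequality and a free constant $\alpha_1$ rather than your $(h/H)^{1/2}$ estimate.

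The one substantive difference is how the energy of the flux functions is handled. You assert $\|\nabla\Psi_l^{(i)}\|_{L^2(K_i)}\lesssim H^{-1}$, $\|\Psi_l^{(i)}\|_{L^2(\partial K_i)}\lesssim H^{-1/2}$, and then invoke a local Poincar\'e inequality on each perforated coarse block to get $\|\Pi w\|_A\lesssim\|q\|_{L^2}$. These are plausible but not free: the boundary datum $|E_l|^{-1}n\chi_{E_l}$ is discontinuous at the endpoints of $E_l$, so the Dirichlet energy of its Stokes extension is not obviously $O(H^{-2})$ uniformly in $h$ (the discrete energy may carry a $\log(H/h)$ factor), and the local Poincar\'e requires every coarse block to meet a perforation. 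The paper avoids all of this. It sets $D_j=\int_{K_i}|\nabla\Psi_j|^2+h^{-1}\int_{\partial K_i}|\Psi_j|^2$ and $\alpha=\max_j D_j$, bounds $\|v_1\|_A^2\lesssim\alpha\|q\|_{L^2}^2$ directly from the trace inequality applied to $c_{j,i}=\int_{E_j}u\cdot n$ (no local Poincar\'e), and then simply requires $h$ small enough that $C_1\alpha hH=O(1)$. This is coarser---$\alpha$ may be large and geometry-dependent---but that is exactly what the hypothesis ``$h$ small enough'' is meant to absorb, and it needs nothing beyond assumption~\eqref{eq:assume}. Relatedly, the paper never claims the jump of its $q$-controlling function vanishes pointwise; it only arranges $\int_E[v_1]\cdot n=0$ on each coarse edge, which suffices because $\widehat q$ is edge-wise constant. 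Your stronger claim $[\Pi w]=0$ relies on the specific trace of your $\Psi_l^{(i)}$, which goes beyond what~\eqref{eq:assume} alone provides and is why you mention enriching the offline space.
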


\begin{proof}
Let $q \in Q_H$ and $\widehat{q} \in \widehat{Q}_H$ be arbitrary. 
By the continuous inf-sup condition (\ref{eq:cont-infsup}), there is $u\in H^1_0(\Omega^{\epsilon})^2$ such that 
$\div u = q$ and $\| u \|_{H^1(\Omega^{\epsilon})} \leq \beta^{-1} \|q\|_{L^2(\Omega^{\epsilon})}$.
By the assumption (\ref{eq:assume}), for each coarse element $K_i$, and for every edge $E_j \in \partial K_i$,
there is a basis function $\Psi_j$ such that $\int_{E_j} \Psi_j \cdot n = 1$,
and $\int_{E_k} \Psi_j \cdot n = 0$ for other coarse edges $E_k \in \partial K_i$. 
Note that we suppress the dependence of $\Psi_j$ on $i$ to simplify the notations. 
Then we define $v_1 \in V^{\text{off}}$ by
\begin{equation}
\label{eq:v1}
v_1 = \sum_{K_i \in\mathcal{T}^H} \sum_{E_j\in\partial K_i} c_{j,i} \Psi_j, \quad
\text{with} \quad 
c_{j,i} = \int_{E_j} u\cdot n.
\end{equation}
%with
%\begin{equation*}
%c_{j,i} = \int_{E_j} u\cdot n.
%\end{equation*}
It is clear that 
\begin{equation*}
\int_{E_j} v_1\cdot n = \int_{E_j} u\cdot n, \quad \text{and} \quad
\int_{E_k} v_1 \cdot n = 0.
\end{equation*}
In addition, we define $v_1$ so that $\int_E v_1\cdot n = 0$ for all boundary edges $E\in\partial \Omega^{\epsilon}$.
We also choose the normal vectors in (\ref{eq:v1}) so that the average jumps of $v_1\cdot n$
across all interior coarse edges are zero. This condition can be achieved by choosing a fixed normal direction
for each coarse edge in the definition (\ref{eq:v1}).
By the definition of $b_{\text{DG}}$, integration by parts and using the definition of $v_1$, we have 
\begin{equation*}
b_{\text{DG}}(v_1,q,\widehat{q}) = b(u,q) = \|q\|_{L^2(\Omega^{\epsilon})}^2.
\end{equation*}
Next, we will show that $\|v_1\|_A \leq \alpha \|q\|_{L^2(\Omega^{\epsilon})}$ for some positive constant $\alpha$.
We define the energy $D_j$ of the basis function $\Psi_j$ by
%\begin{equation*}
%\int_{K_i} |\nabla \Psi_j|^2 + \frac{1}{h} \int_{\partial K_i} |\Psi_j|^2 
%\leq (1 + \frac{H}{h\lambda_j} ) \int_{K_i} |\nabla \Psi_j|^2 := D_j
%\end{equation*}
\begin{equation*}
D_j := \int_{K_i} |\nabla \Psi_j|^2 + \frac{1}{h} \int_{\partial K_i} |\Psi_j|^2.
%\leq (1 + \frac{H}{h\lambda_j} ) \int_{K_i} |\nabla \Psi_j|^2 := D_j
\end{equation*}
So, by the definition of $\|\cdot\|_A$, the trace inequality and the continuous inf-sup condition, 
\begin{equation*}
\| v_1\|_A^2 \leq \sum_{K_i \in\mathcal{T}^H} \sum_{E_j\in\partial K_i} c_{j,i}^2 D_j \lesssim \alpha \|q\|_{L^2(\Omega^{\epsilon})}^2,
\end{equation*}
where we define
\begin{equation*}
\alpha = \max_{K_i\in\mathcal{T}^H} \max_{E_j\in\partial E_i} D_j.
\end{equation*}

On the other hand, we can choose $v_2\in V^{\text{off}}$ such that 
\begin{equation*}
\int_{E_j} v_2\cdot n = \frac{1}{2} (hH) \widehat{q},
\quad\text{and} \quad
\int_{E_k} v_2\cdot n = 0
\end{equation*}
if $E_j$ is an interior edge, or
\begin{equation*}
\int_{E_j} v_2\cdot n =  (hH) \widehat{q},
\quad\text{and} \quad
\int_{E_k} v_2\cdot n = 0
\end{equation*}
if $E_j$ is a boundary edge, 
where $n$ is the outward normal vector on the boundary of $K_i$.
This can be achieved by defining
\begin{equation}
\label{eq:v2}
v_2 = \sum_{K_i \in\mathcal{T}^H} \sum_{E_j\in\partial K_i} d_{j,i} \Psi_j, \quad
\text{with} \quad 
d_{j,i} = \sigma (hH) \widehat{q}
\end{equation}
where $\sigma = 1$ or $\sigma = 1/2$ depending on the location of the coarse edge $E_j$.
Thus, we have 
\begin{equation*}
\int_{E_j} [v_2]\cdot n = (hH) \widehat{q}
\end{equation*}
on all interior coarse edges.
By the definition of $b_{\text{DG}}$, 
\begin{equation*}
b_{\text{DG}}(v_2, q, \widehat{q}) =  -\sum_{K\in\mathcal{T}^H} \int_K q \div v_2 + h\sum_{E\in \mathcal{E}^H} \| \widehat{q}\|_{L^2(E)}^2.
\end{equation*}
We can show that $\|v_2\|^2_A \leq C_1 \alpha (hH) h \sum_{E\in \mathcal{E}^H} \| \widehat{q}\|_{L^2(E)}^2$ using arguments similar as above,
where the constant $C_1$ is independent of the mesh size. 

Finally, we let $v = \alpha_1 v_1 + v_2 \in V^{\text{off}}$. Then
\begin{equation*}
b_{\text{DG}}(v,q,\widehat{q}) = \alpha_1 \|q\|_{L^2(\Omega^{\epsilon})}^2  -\sum_{K\in\mathcal{T}^H} \int_K q \div v_2  + h\sum_{E\in \mathcal{E}^H} \| \widehat{q}\|_{L^2(E)}^2.
\end{equation*}
Using the Young's inequality, we have
\begin{equation*}
b_{\text{DG}}(v,q,\widehat{q}) \geq \alpha_1 \|q\|_{L^2(\Omega^{\epsilon})}^2 - \frac{1}{2C_1\alpha(hH)} \sum_{K\in\mathcal{T}^H} \int_K  \div v_2^2   - \frac{C_1\alpha(hH) }{2} \sum_{K\in\mathcal{T}^H} \int_K q ^2 + h \sum_{E\in \mathcal{E}^H} \| \widehat{q}\|_{L^2(E)}^2
\end{equation*}
which implies
\begin{equation*}
b_{\text{DG}}(v,q,\widehat{q}) \geq (\alpha_1 - \frac{C_1 \alpha (hH)}{2}) \|q\|_{L^2(\Omega^{\epsilon})}^2 +\frac{1}{2}h\sum_{E\in \mathcal{E}^H} \| \widehat{q}\|_{L^2(E)}^2.
\end{equation*}
Taking $\alpha_1 = C_1 \alpha (hH)$ and assuming that the fine mesh size $h$ is small enough so that $C_1 \alpha (hH) = O(1)$, we obtain
\begin{equation*}
b_{\text{DG}}(v,q,\widehat{q}) \geq C \| (q,\widehat{q})\|_Q^2
\end{equation*}
where $C$ is a constant independent of the mesh size. Moreover, 
\begin{equation*}
\|v\|_A^2 \lesssim \alpha_1^2 \|v_1\|_A^2 + \|v_2\|_A^2 \lesssim \alpha_1^2 \alpha \|q\|_{L^2(\Omega^{\epsilon})}^2 + \alpha_1 
h \sum_{E\in \mathcal{E}^H} \| \widehat{q}\|_{L^2(E)}^2.
\end{equation*}
Thus, choosing $h$ small enough, we have $\|v\|_A^2 \lesssim \| (q,\widehat{q})\|_Q^2$.
\end{proof}

\subsection{Convergence results}
In this section, we will derive an error estimate between the fine scale solution $u_h$ and coarse scale solution $u_H$. First, we construct a projection of the fine grid velocity in the snapshot space, and estimate the error for this projection. Second, we will estimate the difference between this projection and coarse scale velocity. Combine these two errors, we obtain the results as desired. 
\begin{theorem}
Let $u_h$ be the fine scale velocity solution in \eqref{eq:fine-scale}, and $u_H$ be the coarse scale velocity solution of \eqref{eq:coarse-scale}. The following estimate holds
\begin{equation*}
\|u_h-u_H \|^2_A  \lesssim
\sum_{i=1}^{N}\frac{H}{\lambda_{L_i+1}^{(i)}}(1+ \frac{H}{h\lambda_{L_i+1}^{(i)}})  \int_{\partial K_i} | (\nabla u_{\text{snap}}) \, n |^2
+ H^2 \|f\|_{L^2(\Omega^{\epsilon})}^2,
\end{equation*}
where $u_{\text{\rm snap}}$ is the snapshot solution defined in \eqref{eq:proj}.
\end{theorem}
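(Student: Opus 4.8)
The plan is to reduce the error bound to a best-approximation problem and then to estimate the best-approximation error by passing through an intermediate \emph{snapshot projection} $u_{\text{snap}}$ of $u_h$. The problems \eqref{eq:fine-scale} and \eqref{eq:coarse-scale} differ only in the velocity space ($V_H=V_{\text{off}}\subset V_h^{\text{DG}}$), while the pressure spaces $Q_H\times\widehat{Q}_H$ are identical; hence the coarse problem is a conforming Galerkin restriction of the fine problem in the velocity variable only. Invoking the classical stability theory for mixed/saddle-point problems together with the continuity of $a_{\text{DG}}$ and $b_{\text{DG}}$, the coercivity \eqref{eq:coercivity}, and the discrete inf-sup condition \eqref{eq:infsup}, one obtains, provided $h$ is small enough,
\[
\|u_h-u_H\|_A\;\lesssim\;\inf_{v\in V_H}\|u_h-v\|_A ,
\]
with \emph{no} pressure contribution precisely because $(p_h,\widehat p_h)$ already lies in $Q_H\times\widehat Q_H$. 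It therefore suffices to construct a good competitor $v\in V_{\text{off}}$.

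First I would introduce $u_{\text{snap}}$, whose restriction to each coarse block $K_i$ is the (discrete) Stokes-harmonic function with Dirichlet data $u_h|_{\partial K_i}$; this is the function $u_{\text{snap}}$ from \eqref{eq:proj}, and by construction $u_{\text{snap}}|_{K_i}\in V^i_{\text{snap}}$. Since $u_{\text{snap}}$ agrees with $u_h$ on every coarse edge, all jump terms in $\|u_h-u_{\text{snap}}\|_A$ vanish, so $\|u_h-u_{\text{snap}}\|_A^2=\sum_i\int_{K_i}|\nabla(u_h-u_{\text{snap}})|^2$. On each $K_i$ the difference $u_h-u_{\text{snap}}$ solves a local Stokes problem with right-hand side $f$ and homogeneous Dirichlet data on $\partial K_i$; a local Poincaré/energy estimate then gives $\|\nabla(u_h-u_{\text{snap}})\|_{L^2(K_i)}\lesssim H\|f\|_{L^2(K_i)}$, and summing over $i$ yields $\|u_h-u_{\text{snap}}\|_A^2\lesssim H^2\|f\|_{L^2(\Omega^{\epsilon})}^2$, which is the second term of the estimate.

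Next I would take $v\in V_{\text{off}}$ to be the blockwise spectral projection of $u_{\text{snap}}$: on $K_i$, expand $u_{\text{snap}}|_{K_i}=\sum_k\alpha_k\Phi_k^{(i)}$ in the eigenbasis of \eqref{eq:eig} (normalized so that $S$-orthonormal, $a_i(\Phi_k^{(i)},\Phi_l^{(i)})=\lambda_k^{(i)}\delta_{kl}$), and set $v|_{K_i}=\sum_{k\le L_i}\alpha_k\Phi_k^{(i)}$, so the local error $w_i:=u_{\text{snap}}|_{K_i}-v|_{K_i}=\sum_{k>L_i}\alpha_k\Phi_k^{(i)}$. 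Two ingredients drive the bound. (i) Eigenvalue decay: the $S$-norm of the tail satisfies $\|w_i\|_S^2=\sum_{k>L_i}\alpha_k^2\le (\lambda_{L_i+1}^{(i)})^{-1}a_i(w_i,w_i)$, i.e. $\frac1h\int_{\partial K_i}|w_i|^2\lesssim \frac{H}{h\lambda_{L_i+1}^{(i)}}\,a_i(w_i,w_i)$. (ii) Since $w_i$ is a difference of (discrete) Stokes-harmonic functions and $a_i(w_i,v|_{K_i})=0$, a discrete integration by parts in $K_i$ gives $a_i(w_i,w_i)=a_i(w_i,u_{\text{snap}})=\int_{\partial K_i}\big((\nabla u_{\text{snap}})n+p_{\text{snap}}n\big)\cdot w_i$; combined with $\|w_i\|_{L^2(\partial K_i)}^2\lesssim H\|w_i\|_S^2\le \frac{H}{\lambda_{L_i+1}^{(i)}}a_i(w_i,w_i)$ and Cauchy--Schwarz, one absorbs $\sqrt{a_i(w_i,w_i)}$ and obtains $a_i(w_i,w_i)\lesssim \frac{H}{\lambda_{L_i+1}^{(i)}}\int_{\partial K_i}|(\nabla u_{\text{snap}})\,n|^2$ (the pressure part being controlled by the same boundary flux via the inf-sup). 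Inserting this into (i), and using that the jump term on a coarse edge is bounded by the $L^2(\partial K)$-norms on the two adjacent blocks so that all jump contributions reassemble into $\sum_i\int_{\partial K_i}|\cdot|^2$, gives
\[
\|u_{\text{snap}}-v\|_A^2\;\lesssim\;\sum_{i=1}^{N}\frac{H}{\lambda_{L_i+1}^{(i)}}\Big(1+\frac{H}{h\lambda_{L_i+1}^{(i)}}\Big)\int_{\partial K_i}|(\nabla u_{\text{snap}})\,n|^2 .
\]
The triangle inequality $\|u_h-v\|_A\le\|u_h-u_{\text{snap}}\|_A+\|u_{\text{snap}}-v\|_A$ together with the Céa-type bound above then yields the theorem.

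I expect step (ii) to be the main obstacle: making the discrete integration-by-parts identity rigorous for the fine-grid snapshot functions (they are Stokes-harmonic only in the discrete sense, and $w_i$ does not vanish on $\partial K_i$), and handling the pressure term on $\partial K_i$ so that only the velocity boundary flux $\int_{\partial K_i}|(\nabla u_{\text{snap}})\,n|^2$ ultimately appears — this is exactly where the particular choice of the spectral forms $a_i,s_i$ and the precise definition \eqref{eq:proj} of $u_{\text{snap}}$ are used. The reduction to best approximation and the local Poincaré estimate of the snapshot error are routine once the two preceding lemmas are in hand.
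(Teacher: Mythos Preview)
Your strategy coincides with the paper's: reduce to best approximation via the inf--sup lemma (noting there is no pressure term because $Q_H\times\widehat Q_H$ is shared), split through $u_{\text{snap}}$, cite the spectral estimate for $\|u_{\text{snap}}-u_{\text{off}}\|_A$, and bound $u_0:=u_h-u_{\text{snap}}$ by a Poincar\'e argument. Your outline of step~(ii) is in fact more detailed than the paper, which simply invokes \cite{AdaptiveGMsDGM} for the bound~\eqref{part2}.

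There is, however, one genuine gap in your treatment of $u_0$. You assert that on each $K_i$ the difference $u_0$ ``solves a local Stokes problem with right-hand side $f$ and homogeneous Dirichlet data,'' and conclude $\|\nabla u_0\|_{L^2(K_i)}\lesssim H\|f\|_{L^2(K_i)}$ directly. But $u_h$ is the \emph{symmetric} IPDG solution: when you test the fine-scale equation with $v=u_0$ (which vanishes on all coarse edges), the penalty and consistency terms drop because $[u_0]=0$, but the \emph{symmetrization} term $\sum_{E}\int_E\{(\nabla u_0)\,n\}\cdot[u_h]$ does \emph{not}, since $\nabla u_0$ need not vanish on $\partial K_i$. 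What one actually obtains is
\[
\|\nabla u_0\|_{L^2(\Omega^{\epsilon})}^2 \;=\; (f,u_0)\;+\;\sum_{E\in\mathcal{E}^H}\int_E\{(\nabla u_0)\,n\}\cdot[u_{\text{snap}}],
\]
using $[u_{\text{snap}}]=[u_h]$ and $\int_{K}\nabla u_{\text{snap}}:\nabla u_0=0$ from \eqref{eq:proj}. The paper handles the extra term by a discrete trace/Cauchy--Schwarz bound of the form $\lesssim \|u_0\|_A\big(\tfrac1h\sum_E\int_E|[u_h]|^2\big)^{1/2}$ and then absorbs it into the spectral term, which is why the final estimate for $\|u_0\|_A^2$ in the paper carries \emph{both} the $H^2\|f\|^2$ contribution \emph{and} the eigenvalue sum, not just the former. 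Once you add this step, your argument matches the paper's.
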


\begin{proof}
Let $(u_h, p_h) \in V_h^{\text{DG}} \times Q_H$ be the fine scale solution satisfying \eqref{eq:fine-scale}. 
We will next define a projection, denoted $u_{\text{snap}}$, of $u_h$ in the snapshot space $V_{\text{snap}}$.
For each coarse element $K$, the restriction of $u_{\text{snap}}$ on $K$ is defined by solving 
%Define the projection of $u_h$ in the snapshot space by $u_{\text{snap}}$. The restriction of $u_{\text{snap}}$ in each coarse element $K$ is obtained by solving:
\begin{equation}\label{eq:proj}
\begin{aligned}
-\Delta u_{\text{snap}} + \nabla p_{\text{snap}} &= 0, \quad &\text{in } K \\
\div u_{\text{snap}} &=c, \quad &\text{in }  K\\
u_{\text{snap}} &= u_h, \quad &\text{on }  \partial{K}
\end{aligned}
\end{equation}
where $p_{\text{snap}}$ is a constant, and
%subject to the condition 
%\[
 %\int_{K} p_{\text{snap}} =  \int_{K} p_h.
%\]
$c$ is chosen by the compatibility condition, $c = \frac{1}{|K|} \int_{\partial K}  u_h \cdot n \, ds$.
We remark that $u_{\text{snap}}$ is obtained on the fine grid, and 
we therefore have $u_{\text{snap}} \in V_{\text{snap}}$. 
We define $u_{\text{off}}$ as the projection of $u_{\text{snap}}$ in the offline space $V_H$.
Using \cite{AdaptiveGMsDGM}, we obtain
\begin{equation}\label{part2}
\|u_{\text{snap}} - u_{\text{off}}\|_A^2
\leq \sum_{i=1}^{N}\frac{H}{\lambda_{L_i+1}^{(i)}}(1+ \frac{H}{h\lambda_{L_i+1}^{(i)}})  \int_{\partial K_i} | (\nabla u_{\text{snap}}) \, n |^2.
\end{equation}
%Substracting \eqref{eq:fine-scale} by the variational form of \eqref{eq:proj}, we have
%\begin{equation}
%\begin{aligned}
 %\int_{K^{\epsilon}} \nabla (u_h -u_{\text{snap}}) : \nabla v\\
%-  \sum_{E\in\partial K^{\epsilon}}\int_{E}\Big( \{ \nabla (u_h- u_{\text{snap}}) \cdot n \} \cdot [v] + \{ \nabla v \cdot n \} \cdot [u_h - u_{\text{snap}}] \Big)\\
%+ \frac{\gamma}{h} \sum_{E\in\partial K^{\epsilon}}\int_{E} [u_h - u_{\text{snap}}]\cdot [v]\\
%- \int_{K^{\epsilon}}  (p_h -p_{\text{snap}}) \div v + \int_{\partial K^{\epsilon}} (\widehat{p}_h -\widehat{p}_{\text{snap}}) ([v]\cdot n) & =  \int_{K^{\epsilon}}  fv ,\\
 %-\int_{K^{\epsilon}}  q \div (u_h-u_{\text{snap}})  + \int_{\partial K^{\epsilon}} \widehat{q} ([u_h-u_{\text{snap}}]\cdot n)& = \int_{ K^{\epsilon}} c q.
%\end{aligned}
%\end{equation}
%Take $v = u_{\text{snap}} - u_h, \quad q = p_h- p_{\text{snap}} $,  use the facts that $u_{\text{snap}} = u_h \text{ on }  \partial{K}^{\epsilon}$ and $ \int_{K^{\epsilon}} p_{\text{snap}} =  \int_{K^{\epsilon}} p_h$, and sum the equations together, we have
%\[
 %a_{\text{DG}}(u_h -u_{\text{snap}}, u_h -u_{\text{snap}}) = \int_{K^{\epsilon}} f (u_h -u_{\text{snap}})
%\]
%By coercivity of $ a_{\text{DG}}$ and Poincare inequality, we get 
%\begin{equation}\label{part1}
%\| u_h -u_{\text{snap}}\|_A \leq C H \|f\|_{L^2(\Omega^{\epsilon})}.
%\end{equation}

Next, by comparing (\ref{eq:coarse-scale}) and (\ref{eq:fine-scale}), we have
\begin{equation}\label{eq:snap-error}
\begin{aligned}
a_{\text{DG}}(u_{h}-u_H,v) + b_{\text{DG}}(v,p_{h}-p_H,\widehat{p}_{h}-\widehat{p}_H) &= 0,\\
b_{\text{DG}}(u_{h}-u_H,q, \widehat{q}) &= 0,
\end{aligned}
\end{equation}
for all $v \in V_H,  \, q\in Q_H,  \, \widehat{q}\in\widehat{Q}_H$.
Then, using the inf-sup condition (\ref{eq:infsup}) and standard arguments, we have
\begin{equation}
\| u_{h} - u_H \|_A \lesssim \| u_{h} - u_{\text{off}}\|_A.
\end{equation}

Finally, we define $u_h = u_{\text{snap}} + u_0$, where $u_0 = u_h - u_{\text{snap}}$.
Then (\ref{eq:snap-error}) and (\ref{part2}) imply that
\begin{equation}
\| u_{h} - u_H \|_A^2 \lesssim
\sum_{i=1}^{N}\frac{H}{\lambda_{L_i+1}^{(i)}}(1+ \frac{H}{h\lambda_{L_i+1}^{(i)}})  \int_{\partial K_i} | (\nabla u_{\text{snap}}) \, n |^2 + \|u_0\|^2_A.
\end{equation}
By (\ref{eq:fine-scale}), we have
\begin{equation}
\label{eq:u0-1}
a_{\text{DG}}(u_0,v) = -a_{\text{DG}}(u_{\text{snap}},v) + 
(f,v)+  \int_{\Gamma_D} \Big(\frac{\gamma}{h} g_D \cdot v -  ((\nabla v) \, n) \cdot g_D \Big) - b_{\text{DG}}(v,p_h,\widehat{p}_h)
\end{equation}
for all $v\in V_h^{\text{DG}}$.
By the definition of $u_0$, we see that $u_0 = 0$ on $\partial K$ for all coarse element $K\in\mathcal{T}^H$.
Thus, using (\ref{eq:coercivity}) and taking $v=u_0$ in (\ref{eq:u0-1}), we have
\begin{equation}
\| \nabla u_0\|_{A}^2 \lesssim -a_{\text{DG}}(u_{\text{snap}},u_0) + 
(f,u_0).
\end{equation}
Notice that
\begin{equation}
(f,u_0) = \sum_{K\in\mathcal{T}^H} \int_K f\, u_0
\leq \sum_{K\in\mathcal{T}^H} \|f\|_{L^2(K)} \, \|u_0\|_{L^2(K)}
\lesssim H \sum_{K\in\mathcal{T}^H} \|f\|_{L^2(K)} \, \|\nabla u_0\|_{L^2(K)}
\end{equation}
where the last inequality follows from the Poincare inequality. So, we obtain
\begin{equation}
\label{eq:L1}
(f,u_0) \lesssim H \|f\|_{L^2(\Omega^{\epsilon})} \, \|u_0\|_A.
\end{equation}
By the definition of $a_{\text{DG}}$ and $u_0$, we have
\begin{equation*}
a_{\text{DG}}(u_{\text{snap}},u_0) = \int_{\Omega^{\epsilon}} \nabla u_{\text{snap}} : \nabla u_0
- \sum_{E\in\mathcal{E}^H} \int_E \{ (\nabla u_0)  n\}  \cdot [u_{\text{snap}}].
\end{equation*}
Notice that $[u_{\text{snap}}] = [u_h]$ for all $E$. Thus, by the results in \cite{AdaptiveGMsDGM}, we obtain
\begin{equation}
\label{eq:L2}
\sum_{E\in\mathcal{E}^H} \int_E \{ (\nabla u_0)  n\}  \cdot [u_{\text{snap}}]
\lesssim \|u_0\|_A \, \Big( \frac{1}{h} \sum_{E\in\mathcal{E}^H} \int_E |[u_h]|^2\Big)^{\frac{1}{2}}. 
\end{equation}
By the variational form of (\ref{eq:proj}), we have, for all coarse elements $K$
\begin{equation*}
\int_{K} \nabla u_{\text{snap}} : \nabla u_0
= \int_K p_{\text{snap}} \, \div u_0 = 0
\end{equation*}
since $p_{\text{snap}}$ is a constant and $u_0=0$ on $\partial K$.
Combining the above results, we have
\begin{equation}
\| u_0\|_A^2 \lesssim \sum_{i=1}^{N}\frac{H}{\lambda_{L_i+1}^{(i)}}(1+ \frac{H}{h\lambda_{L_i+1}^{(i)}})  \int_{\partial K_i} | (\nabla u_{\text{snap}}) \, n |^2
+ H^2 \|f\|_{L^2(\Omega^{\epsilon})}^2.
\end{equation}
This completes the proof.

\end{proof}

\section{Conclusion}

In this paper, we develop a new GMsFEM for Stokes problems in perforated domains.
The method is based on a discontinuous Galerkin formulation, and constructs
local basis functions for each coarse region. 
The construction of basis follows the general framework of GMsFEM 
by using local snapshots and local spectral problems.
In addition, we use a hybridized technique in order to achieve mass conservation.
Our numerical results show that only a few basis functions per coarse region are needed
in order to obtain a good accuracy.
We also show numerically that the multiscale solution satisfies the mass conservation property.
Furthermore, we prove the stability and the convergence of the scheme.
In the future, we plan to develop adaptivity ideas \cite{chung2016online,chung2016goal} for this method.

\section{Acknowledgement}
%YE's work is partially supported by the U.S. Department of Energy Office of Science, Office of Advanced Scientific Computing Research, Applied Mathematics program under Award Number DE-FG02-13ER26165 and  the DoD Army ARO Project and NSF (DMS 0934837 and DMS 0811180).
EC's research is partially supported by Hong Kong RGC General Research Fund (Project: 400813)
and CUHK Faculty of Science Research Incentive Fund 2015-16.
MV's  work is partially supported by Russian Science Foundation Grant RS 15-11-10024 and  RFBR 15-31-20856.

\bibliographystyle{siam}
\bibliography{references}

\end{document}